\definecolor{MyLinkColor}{rgb}{0,0,0.4}
\newcommand{\Gkh}{\mathcal{G}_{kh}}
\newcommand{\Ckh}{\mathcal{C}_{kh}}
\newcommand{\Dlt}{\bigtriangleup}
\newtheorem{thm}{Theorem}[section]
\newtheorem{definition}[thm]{Definition}
\newtheorem{lemma}[thm]{Lemma}
\newtheorem{remark}[thm]{Remark}
\numberwithin{equation}{section}   
\title{Capillary-Gravity Water Waves: Modified Flow Force Formulation}
\author[C.~I.~Martin]{Calin Iulian Martin}
\address{Faculty of Mathematics, University of Vienna, Oskar Morgenstern Platz 1, 1090 Vienna, Austria \\
	E-mail address: \href{mailto:calin.martin@univie.ac.at}{\textsf{calin.martin@univie.ac.at}}}
\author[B. Basu]{Biswajit Basu$^*$}
\thanks{$^*$Corresponding author}
\address{School of Engineering, Trinity College Dublin, Dublin 2, Ireland.\\
	E-mail address: \href{mailto:basub@tcd.ie}{\textsf{basub@tcd.ie}}}
\begin{document}

\maketitle
\begin{abstract}
\noindent The classical irrotational capillary-gravity water wave problem described by the Euler equations with a nonlinear free surface boundary condition over a flat bed is considered. A 
modified flow force has been defined and a new formulation of capillary-gravity waves in the framework of the modified flow force function has been developed. Using bifurcation theory, the local existence of waves of small amplitude is proved.

\vspace{1em}
\noindent
{\bfseries Keywords}: Capillary-gravity, water waves, flow force, free surface flows, finite depth, local bifurcation.\\
 {\bfseries Mathematics Subject Classification}: 35Q35, 35J15

\end{abstract}

\section{Introduction}
Research on periodic traveling water waves though initiated in 1847 by Stokes \cite{stokes1847}, has been still ongoing with contributions from Nekrasov, Levi-Civita and Struik \cite{nekrasov1921steady, levi1924determinazione, struik1926determination} in 1920s and from  Krasovskii, Keady and Norbury, Toland and McLeod  \cite{krasovskii1961theory, toland1996stokes,  mcleod1980stokes} subsequently.
More recently, the existence of large amplitude water waves with vorticity was proved rigorously in the paper by Constantin and Strauss \cite{constantin2004exact}. 
For previous notable research in this area we refer the reader to \cite{constantin2011periodic, walsh2009steady, walsh2009stratified, ac11, conswahlen07, 
constantin2004symmetry, constantin2006variational, constantin2007stability, constantin2004symdeep, ac11am,
cons_varvaruca2011}.

In the previous studies, the stream function formulation has been one of the popular approaches to deal with the water wave problem. In this formulation, the mass flux of the flow is assumed to be a constant.  First in the paper by Benjamin and Lighthill \cite{benjamin}, and then in the paper by Keady and Norbury \cite{keadynorbury78b} a quantity called flow force related to the flow of the fluid with a free surface over a flat bed has been mentioned. This quantity has the property that it is  zero on the flow bed and is invariant on the free surface. The latter property allows
the transformation of the unknown domain (with a free surface) into a fixed domain by means of a suitable change of variables. Also, the level curves of the flow force function within the fluid domain are monotone in the vertical direction. For values of the Bernoulli\rq{}s constant close to the critical one, Kozlov \cite{kozlov} used the flow force to uniquely parameterize the subset of waves with crest located on a fixed vertical and verified the Benjamin-Lighthill conjecture. Using the concept of ``flow force'', recently Basu \cite{Basu2019JDE} has proposed a novel flow force formulation of the free surface nonlinear boundary value problem for water waves. The formulation by Basu \cite{Basu2019JDE} for the case of gravity water waves needs to be suitably modified to account also for surface tension effects. Indeed, surface tension is present in gravity driven free surface flows and some recent research results on capillary-gravity water waves can be found in 
\cite{HenCG, henrymatioc, martin2013, MarMatSJAM, MarMatJNS, MarMatJDE, MatMfM, MatMatCMP, wahlen2006}.

The contribution of this paper is to develop a flow force based formulation of the irrotational capillary-gravity water waves with free surface flow over a flat bed. In this context, a ``modified flow force'' function is proposed to handle the capillary-gravity case. While we assume the free surface to be a graph, we do not need to require a priori the absence of stagnation points in the water flow. However, we consider irrotational flows in this paper and irrotational flows do not have stagnation points.
The original water wave problem is reformulated as a quasilinear pseudodifferential equation for a function of one variable, giving the elevation of free surface when the fluid domain is the conformal image of a half-plane. Instrumental in achieving our goals is the \emph{Dirichlet-Neumann operator}. 
The method of Crandall-Rabinowitz of bifurcation from simple eigenvalues is used to prove the existence of small amplitude capillary-gravity waves.

\section{Presentation of the water wave problem}
\subsection{Stream function formulation of the water wave problem}
Let us consider a water wave profile oscillating about a flat surface. The problem of two-dimensional periodic steady waves traveling at speed $c$, has a 
space-time dependence of the free surface, of the velocity field and of the pressure of the form ($\widetilde{X}-ct$) and is periodic with period $L>0$. The flat bed is given 
by $\widetilde{Y}=0$ and the oscillating wave profile is represented by $\widetilde{Y}=\eta (\widetilde{X}-ct)$, where $\widetilde{Y}$ is the vertical axis. The free boundary problem can be simplified by
a change of the reference frame from a moving one to a stationary one. Indeed, the transformation $(\widetilde{X}-ct, \widetilde{Y}) \mapsto (X,Y)$ allows us to supress the time variable from the equations of motion. To introduce the latter we denote the water domain by $\Omega$ which is assumed to be bounded below by the impermeable flat bed
    $$\mathcal{B} = \{(X,0); X \in \mathbb{R}\},$$
    and above by an a priori unknown curve
    \begin{equation}\label{shape_freesurface}
    \mathcal{S} = \{(X,\eta(X));X \in \mathbb{R}\},
    \end{equation}
    satisfying
    $$\eta(X+L)=\eta(X)\quad{\rm for}\,\,{\rm all}\quad X\in\mathbb{R},$$
    where $L>0$ is a constant that denotes the period of the motion.
The motion of the water flow obeys the Euler equations
\begin{equation}
\begin{split}
(u_1-c)u_{1X}+u_2 u_{1Y}&=-P_X,\\
(u_1-c)u_{2X}+u_2 u_{2Y}&=-P_Y-g,
\end{split}
\end{equation}
and the equation of mass conservation
\begin{equation}\label{mass_cons}
u_{1X}+u_{2Y}=0,
\end{equation}
where $(u_1, u_2)$ and $P$  represent the velocity field and the pressure in the flow, respectively, and are sought to be periodic of period $L$. The description of the water wave problem is complete together with the specification of the boundary conditions. 
Of them we start with the kinematic boundary conditions that ensure that the two boundaries are impermeable. More precisely, the top boundary is defined by the 
condition
\begin{equation}\label{kin_surf}
u_2(X,\eta(X))=(u_1(X,\eta(X))-c)\eta_X
\end{equation} 
while, on the bottom, we require no motion in the vertical direction, that is
\begin{equation}
 u_2(X,0)=0\quad{\rm for}\,\,{\rm all}\,\,X\in\mathbb{R}.
\end{equation}
We close the system by the dynamic boundary condition
\begin{equation}\label{dyn_bc}
P(X, \eta(X)) = P_{atm} - \sigma \frac{\eta_{XX}}{(1+\eta_{X}^{2})^{3/2}} ~~\text{on}~~Y=\eta(X),
\end{equation}
where $P_{atm}$ is the constant atmospheric pressure and $\sigma>0$ denotes the coefficient of surface tension.

Aiming at reducing the number of unknowns, we introduce the stream function $(X,Y)\rightarrow \psi(X,Y)$ which
 is defined (up to an additive constant) by the relations
\begin{equation}\label{stream_func}
    \psi_X=-u_2, \quad \psi_Y=u_1-c \ .
\end{equation}
We assume throughout the paper that the flow is irrotational, that is 
\begin{equation}\label{eq:eq2}
u_{1_Y}-u_{2_X}\equiv 0,
\end{equation}
equation that readily implies via \eqref{stream_func} that $\psi$ is a harmonic function.

Another characteristic of the flow is the mass flux (relative to the uniform flow $c$) which is defined as
\begin{equation}\label{eq:eq3}
p_0=\int_{0}^{\eta(X)} (u_1(X,Y)-c) dY.
\end{equation}
Availing of the equation of mass conservation \eqref{mass_cons} and of \eqref{kin_surf} we infer that $p_0$ is indeed a constant.


From the Euler equation, we get Bernoulli's law which states that the quantity
\begin{equation}\label{eq:eq5}
E := \frac{(c-u_{1})^2+v^2}{2}+gY+P 
\end{equation}
is a constant (called hydraulic head) throughout the flow.
The dynamic boundary condition at the surface can be restated as
\begin{equation}\label{eq:eq6}
    \psi^2_X+\psi_Y^2+2gY - 2\sigma \frac{\eta_{XX}}{(1+\eta_{X}^{2})^{3/2}}=Q~~\text{on}~~Y=\eta(X)
\end{equation}
where $Q=2(E-P_{atm})$ is a constant for any given flow.

We can formulate the free surface irrotational capillary-gravity water wave problem as the system \cite{constantin2004exact}
\begin{equation}\label{syst_psi}
    \begin{split}
        \Dlt\psi= 0 ~~\text{in}~~0<Y<\eta(X),\\
        |\nabla\psi|^2+2gY - 2\sigma \frac{\eta_{XX}}{(1+\eta_{X}^{2})^{3/2}}=Q~~\text{on}~~Y=\eta(X),\\
        \psi=0~~\text{on}~~Y=\eta(X),\\
        \psi=-p_0~~\text{on}~~Y=0,
    \end{split}
\end{equation}
which is to be solved for functions which are $L$-periodic in the $X-$variable within the fluid domain $ \Omega$.

\subsection{Modified flow force reformulation}

We now reformulate the governing equations in Section 2 using a different and novel approach
based on the modified flow force function.  
We recall that the flow force function (in the absence of surface tension) is given by the expression
\begin{equation} \label{flowforce}
\overline{S}(X,Y) =  \int_{0}^Y [P(X,r) + ( u_{1}(X,r) - c)^2] {\rm d}r  \ \ 
\end{equation}
where,
the pressure at $(x,y)$ is denoted by the function $P(x,y)$  
and $P_{atm}$ is the atmospheric pressure. For our purposes (which pertain to a problem involving the combined effect of gravity and surface tension) we will need a modified flow force function. The latter is defined as

\begin{equation} \label{mod_flowforce}
S(X,Y) =  \overline{S}(X,Y) +e(X,Y) \ \ 
\end{equation}
where
\begin{equation} \label{e}
e(X,Y) =e_{0}(X) \frac{Y}{\eta(X)} \ ,  
\end{equation}
and
\begin{equation} \label{e_0}
e_{0}(X) = - P_{atm} \eta(X) - \frac{ \sigma }{\sqrt{{1+\eta_{X}^{2}}}}  + \sigma 
\end{equation}

\begin{remark}
An aspect of vital importance in the following considerations is 
that the (modified) flow force function $S$ is constant on the two boundaries of the fluid domain. Indeed, it is immediate from \eqref{flowforce} and \eqref{e} that 

\begin{align*}
\begin{array}{ll}
S = 0 & \mbox{ on the bottom }\mathcal{B}. \\
\end{array}
\end{align*}
We will show in the following that $S$ is constant on the free surface. 
To this end we note that the gradient of $\overline{S}$ satisfies 
\begin{equation}\label{gradS}
 \overline{S}_X=-(u_1-c)u_2,\qquad \overline{S}_Y=P(X,Y)+(u_1(X,Y)-c)^2,
\end{equation}
where the first equation above follows from the equality
\begin{equation}\label{eq:12}
     P_X + 2(u_{1}-c) {u_{1}}_X
    = -(u_{1}-c) {u_{2}}_Y - u_{2}{u_{1}}_Y = -\{(u_{1}- c)u_{2}\}_Y ,
    \end{equation}
which is a consequence of the Euler- and mass conservation equation.

Note also that on the free surface $\mathcal{S}$ we have
\begin{equation}
e(X,\eta(X)) =e_0(X).
\end{equation}
Setting now $S_0(X):=S(X,\eta(X))$ we find that 

\begin{equation}\label{eq:11}
    \begin{split}
    \frac{dS_0}{dX} =&\eta_X \Big(P(X,\eta(X)) + (u_{1}(X,\eta(X)) - c)^2\Big) \\
    &-u_{2}(X,\eta(X))\Big(u_{1}(X,\eta(X))-c \Big)\\ 
    & - \eta_X\left(P_{atm}-\sigma\frac{\eta_{XX}}{   (1+\eta_{X}^{2})^{\frac{3}{2}}    }\right)\\
     = & (u_1-c)\big( (u_1-c) \eta_X -u_2 \big)\\
     = & 0,
    \end{split}
\end{equation}
where, to obtain the last equalities above, we have also used the dynamic and surface kinematic conditions. Thus, $S_0$ is, in fact, a constant.
\end{remark}

Recalling that 

\begin{equation}\label{eq:17}
\begin{split}
&S_X = (c-u_{1})u_{2} +e_{X}, \\
&S_Y  = P+ (u_{1}-c)^2 + e_{Y}, 
\end{split}
\end{equation}
we differentiate the first equation with respect to $X$ and the second one with respect to $Y$, obtaining that 

\begin{equation}\label{eq:18}
S_{XX} + S_{YY} =- g - 2(c-u_{1}) ({u_1}_Y - {u_{2}}_X) +e_{XX} 
\end{equation}
which, in the absence of any vorticity, leads to 

\begin{equation}\label{eq:19}
\Delta S = -g + e_{XX} 
\end{equation}
with $S = 0$ on the flat bed $\mathcal{B}$ and $S = S_0$ on the free surface $\mathcal{S}$. The nonlinear dynamic boundary condition on the free surface may be expressed as
\begin{equation}\label{eq:20}
 \left(\frac{\big(S_X - e_X)^2}{(S_Y - P - e_Y)}
+ (S_Y - P - e_Y)\right)\Big|_{(X, \eta(X))} 
+2 g Y -  2 \sigma\frac{\eta_{XX}}{(1+\eta_{X}^{2})^{\frac{3}{2}} }= Q.
\end{equation}
Summing up, we have proved that the water wave problem \eqref{syst_psi} can be written in terms of the modified flow force function $S$ as
\begin{equation}\label{syst_S}
 \begin{split}
  \Delta S = -g + e_{XX},\\
  S=S_0\,\,{\rm on}\,\,\mathcal{S},\\
  S=0\,\,{\rm on}\,\,\mathcal{B},\\
  \left(\frac{\big(S_X - e_X)^2}{(S_Y - P - e_Y)}
+ (S_Y - P - e_Y)\right)\Big|_{(X, \eta(X))} 
+2 g Y -  2 \sigma\frac{\eta_{XX}}{(1+\eta_{X}^{2})^{\frac{3}{2}} }= Q.
 \end{split}
\end{equation}

\subsection{Formulation of the water wave problem as a quasilinear equation for the free surface}
We will aim to further reformulate the problem \eqref{syst_S} as a single equation for the free surface $\mathcal{S}$. To this end, we need a few notations and some preliminary results. For the proof of these results we refer the 
reader to \cite{cons_varvaruca2011}.
\begin{definition}
For $p\in\mathbb{N}$ and $\alpha\in (0,1)$ we denote by $C^{p,\alpha}$ the space of Hölder continuous functions. 
The space of functions of class $C^{p,\alpha}$ over any compact subset of their domain of definition will be denoted with $C_{\textrm{loc}}^{p,\alpha}$. Moreover, by $C_{2\pi}^{p,\alpha}$ we denote the space of functions of one real variable which are $2\pi$ periodic and of class $C_{\textrm{loc}}^{p,\alpha}$ in $\mathbb{R}$, while
by $C_{2\pi, o}^{p,\alpha}$ we denote the functions that are in $C_{2\pi}^{p,\alpha}$ and have zero mean over one period. Finally, by $L_{2\pi}^{2}$ we denote the space of $2\pi$-periodic locally square integrable functions of one real variable. By $L_{2\pi ,\textrm{o}}^{2}$ we denote the subspace of $L_{2\pi}^{2}$ whose elements have zero mean over 
one period.
\end{definition}
A key ingredient for completing our tasks is played by the \emph{Dirichlet-Neumann operator} associated to a horizontal strip in the plane. Before we introduce the Dirichlet-Neumann operator let us set
$$\mathcal{R}_{d}:=\{(x,y)\in\mathbb{R}^{2}:-d<y<0\},$$ where $d>0$.
\begin{definition}
Given $w\in C_{2\pi}^{p,\alpha}$ we let $W\in C^{p,\alpha}(\overline{\mathcal{R}}_{d})$ be the unique solution of
\begin{equation}
 \begin{array}{rcl}
\Delta W & = & 0\,\,\textrm{in}\,\,\mathcal{R}_{d},\\
W(x,-d) & =  & 0,\,\, x\in\mathbb{R},\\
W(x,0) & = & w(x),\,\,x\in\mathbb{R}.
\end{array}
\end{equation}
The function $(x,y)\rightarrow W(x,y)$ is $2\pi$-periodic in $x$ throughout $\mathcal{R}_{d}$. For $p\in\mathbb{Z},p\geq 1$, and $\alpha\in (0,1)$  we define the \textit{periodic 
Dirichlet-Neumann operator for a strip} $\mathcal{G}_{d}$ by 
$$\mathcal{G}_{d}(w)(x):=W_{y}(x,0),\,\,x\in\mathbb{R}.$$
\end{definition}
\noindent The Dirichlet-Neumann operator possesses the following properties.
\begin{lemma}\label{DN_prop}
\begin{enumerate}
\item [ $(i)$ ] The operator $\mathcal{G}_{d}:C_{2\pi}^{p,\alpha}\rightarrow C_{2\pi}^{p-1,\alpha}$ is a bounded linear operator.
\item [ $(ii)$ ] If the function $w$ takes the constant value $c$ then 
\begin{equation}
 \label{DNonconst}
\mathcal{G}_{d}(c)=\frac{c}{d}.
\end{equation}
\end{enumerate}
\end{lemma}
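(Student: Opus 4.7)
My plan is to treat the two parts separately, beginning with the easier statement \textbf{(ii)} and then addressing \textbf{(i)}.

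For \textbf{(ii)}, when $w\equiv c$ the affine function $W(x,y)=c(y+d)/d$ is harmonic, satisfies $W(x,-d)=0$ and $W(x,0)=c$, and is trivially $2\pi$-periodic in $x$. By uniqueness of the Dirichlet problem on the strip $\mathcal{R}_{d}$ within the class of bounded $2\pi$-periodic solutions, this is the unique $W$, so $\mathcal{G}_{d}(c)(x)=W_{y}(x,0)=c/d$.

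For \textbf{(i)}, linearity is immediate: if $W_{1},W_{2}$ solve the Dirichlet problem with data $w_{1},w_{2}$, then $\alpha W_{1}+\beta W_{2}$ solves it for $\alpha w_{1}+\beta w_{2}$, and the uniqueness in the definition transfers linearity to the trace $W_{y}(\cdot,0)$. To set up the boundedness estimate I would use separation of variables: writing $w(x)=\sum_{n\in\mathbb{Z}}\hat{w}(n)e^{inx}$, the explicit solution reads
$$
W(x,y)=\hat{w}(0)\,\frac{y+d}{d}+\sum_{n\neq 0}\hat{w}(n)\,\frac{\sinh(|n|(y+d))}{\sinh(|n|d)}\,e^{inx},
$$
so that $\mathcal{G}_{d}$ acts as a Fourier multiplier with symbol $1/d$ at $n=0$ and $|n|\coth(|n|d)$ otherwise. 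Since $|n|\coth(|n|d)=|n|+O(e^{-2|n|d})$ as $|n|\to\infty$, the operator differs from the first-order operator $\sqrt{-\partial_{x}^{2}}$ by a smoothing (exponentially regularizing) remainder, and therefore should lose exactly one derivative.

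To convert this heuristic into a Hölder estimate I would invoke classical Schauder theory: restricting to the fundamental cell $[0,2\pi]\times[-d,0]$ with periodic side-conditions in $x$, zero datum on $\{y=-d\}$, and $w$ on $\{y=0\}$, the Dirichlet problem admits a unique solution $W\in C^{p,\alpha}$ satisfying $\|W\|_{C^{p,\alpha}}\leq C(d,p,\alpha)\|w\|_{C^{p,\alpha}}$. Taking the trace of $W_{y}$ on $\{y=0\}$ yields $\mathcal{G}_{d}(w)\in C^{p-1,\alpha}_{2\pi}$ with the asserted bound. The principal obstacle is precisely this boundary Schauder estimate on the periodic strip; while standard, its careful verification is where the real work lies, and I would expect the authors to defer to \cite{cons_varvaruca2011} for the detailed argument.
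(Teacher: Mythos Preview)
Your proposal is correct, and in fact it goes beyond what the paper itself does: the paper provides no proof of this lemma whatsoever, stating immediately before the definition of $\mathcal{G}_{d}$ that ``for the proof of these results we refer the reader to \cite{cons_varvaruca2011}.'' You correctly anticipated this deferral in your closing sentence. Your argument for (ii) via the explicit affine harmonic extension is exactly right, and your outline for (i)---linearity from uniqueness, the Fourier-multiplier representation with symbol $|n|\coth(|n|d)$, and the appeal to boundary Schauder estimates on the periodic strip---is the standard route and is essentially what one finds in the cited reference.
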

Closely related to the Dirichlet-Neumann operator is the periodic Hilbert transform for the strip $\mathcal{R}_d$. More precisely, having the previous notation in mind and setting $Z$ to be the unique (up to a constant) harmonic function such that $Z+iW$ is holomorphic in $\mathcal{R}_d$, we define for $w\in C_{2\pi, o}^{p,\alpha}$ 
$$\mathcal{C}_d(w)(x):=Z(x,0)\,\,{\rm for}\,\,x\in\mathbb{R}.$$ The map $\mathcal{C}_d$ is called the \emph{periodic Hilbert transform} for the strip $\mathcal{R}_d$. Some useful properties of the Hilbert transform are listed below.
\begin{lemma}\label{ht_rep}
Assume that 
$$w=\sum_{n=1}^{\infty}a_{n}\cos (nx)+\sum_{n=1}^{\infty}b_{n}\sin (nx),$$ is the Fourier series expansion of $w\in L_{2\pi,\textrm{o}}^{2}$.
 Then
\begin{equation}
 \mathcal{C}_{d}(w)=\sum_{n=1}^{\infty}a_{n}\coth (nd)\sin (nx)-\sum_{n=1}^{\infty}b_{n}\coth (nd)\cos (nx)
\end{equation}
\end{lemma}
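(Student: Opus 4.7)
The plan is to compute $\mathcal{C}_d$ mode by mode, exploiting linearity together with the uniqueness of the harmonic extension on $\mathcal{R}_d$. Since trigonometric polynomials with zero mean are dense in $L^2_{2\pi,\mathrm{o}}$, it suffices to identify the action of $\mathcal{C}_d$ on the basis functions $\cos(nx)$ and $\sin(nx)$ for each $n\geq 1$, and then pass to the limit in the Fourier series.

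For the building blocks, I would exhibit the harmonic extensions explicitly. A natural separation-of-variables ansatz shows that the unique $2\pi$-periodic harmonic functions on $\overline{\mathcal{R}}_d$ vanishing at $y=-d$ and equal to $\cos(nx)$, respectively $\sin(nx)$, at $y=0$ are
\begin{equation*}
W_n^{(c)}(x,y)=\frac{\sinh(n(y+d))}{\sinh(nd)}\cos(nx),\qquad W_n^{(s)}(x,y)=\frac{\sinh(n(y+d))}{\sinh(nd)}\sin(nx).
\end{equation*}
Harmonicity is immediate, and both boundary conditions hold by construction. Uniqueness within the class of $2\pi$-periodic harmonic functions on $\mathcal{R}_d$ rules out any other candidate.

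Next I would construct the harmonic conjugate $Z_n$ from the Cauchy--Riemann equations $Z_x=W_y$ and $Z_y=-W_x$. Integrating the first equation in $x$ and checking the second to eliminate the $y$-dependent ambiguity yields
\begin{equation*}
Z_n^{(c)}(x,y)=\frac{\cosh(n(y+d))}{\sinh(nd)}\sin(nx),\qquad Z_n^{(s)}(x,y)=-\frac{\cosh(n(y+d))}{\sinh(nd)}\cos(nx),
\end{equation*}
up to an additive real constant, which is fixed to zero by the zero-mean convention implicit in the definition of $\mathcal{C}_d$. Evaluating at $y=0$ gives $Z_n^{(c)}(x,0)=\coth(nd)\sin(nx)$ and $Z_n^{(s)}(x,0)=-\coth(nd)\cos(nx)$, which is precisely the claimed action on a single Fourier mode.

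Finally, I would promote the mode-wise identity to arbitrary $w\in L^2_{2\pi,\mathrm{o}}$ by truncation. For the $N$-th partial Fourier sum $w_N$, linearity gives the claimed formula directly; since $\coth(nd)\to 1$ monotonically as $n\to\infty$, the sequence of coefficients $\{a_n\coth(nd),\,b_n\coth(nd)\}$ lies in $\ell^2$ whenever $\{a_n,b_n\}$ does, so the series on the right converges in $L^2_{2\pi,\mathrm{o}}$. The main technical point is to justify that the harmonic conjugate of the limit equals the limit of the harmonic conjugates; this is handled by the continuous dependence of the harmonic extension on its Dirichlet data (e.g.\ via the Poisson kernel for the strip $\mathcal{R}_d$), which ensures $W_N\to W$ and hence $Z_N\to Z$ in the relevant function spaces, so that taking traces at $y=0$ commutes with the infinite summation.
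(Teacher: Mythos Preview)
Your argument is correct and follows the standard separation-of-variables route: explicit harmonic extension of each Fourier mode, explicit harmonic conjugate via the Cauchy--Riemann system, evaluation at $y=0$, and then passage to the limit using boundedness of the multiplier $\coth(nd)$. Note, however, that the paper does not actually supply its own proof of this lemma; it is listed among the preliminary results whose proofs are delegated to \cite{cons_varvaruca2011}. Your computation is precisely the natural argument one finds in that reference, so there is nothing to contrast---your proof is essentially the expected one. The only point worth tightening is the final continuity step: since the paper defines $\mathcal{C}_d$ a priori on $C^{p,\alpha}_{2\pi,\mathrm{o}}$, the cleanest way to reach $L^2_{2\pi,\mathrm{o}}$ is exactly what you sketch, namely to observe that the Fourier-multiplier formula you derived on trigonometric polynomials defines a bounded operator on $L^2_{2\pi,\mathrm{o}}$ (with norm $\coth d$), and to take this as the extension.
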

\begin{lemma}\label{ht_prop}
\begin{enumerate}
\item [ $(i)$ ] For all $d>0, \, p\in\mathbb{N}$ and $\alpha\in (0,1)$ it holds that $\mathcal{C}_d: C_{2\pi, o}^{p,\alpha}\rightarrow C_{2\pi, o}^{p,\alpha}$ is a bounded linear operator. Moreover, $\mathcal{C}_{d}^{-1}=-\mathcal{C}_{d}:C_{2\pi,\textrm{o}}^{p,\alpha}\rightarrow C_{2\pi,\textrm{o}}^{p,\alpha}$ is also a bounded linear operator.
\item [ $(ii)$ ] $\mathcal{G}_{d}(w)=(\mathcal{C}_{d}(w))^{\prime}=\mathcal{C}_{d}(w^{\prime})$  {\rm for all}
$w\in  C_{2\pi, o}^{p,\alpha}$ mit $p\geq 1$.
\item  [ $(iii)$ ] If $w\in  C_{2\pi}^{p,\alpha}$ then $\mathcal{G}_{d}(w)=\frac{\left[w\right]}{d}+\mathcal{C}_{d}(w^{\prime})$, where $[w]$ denotes the average of $w$ over one period.
\end{enumerate}
\end{lemma}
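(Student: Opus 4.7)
The plan is to extract everything from the Fourier series in Lemma~\ref{ht_rep} together with the explicit Fourier expansion of the harmonic extension $W$, and to import boundedness on Hölder spaces from the classical circular Hilbert transform.

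For (i), I would note that the Fourier multipliers of $\mathcal{C}_d$ differ from those of the standard periodic Hilbert transform $H$ on the torus (which carries multipliers $\pm 1$ on the $\cos nx,\sin nx$ basis) by exactly $\pm(\coth(nd)-1) = \pm 2/(e^{2nd}-1)$. This correction decays exponentially in $n$, so the remainder $K_d := \mathcal{C}_d - H$ is a convolution operator with a $C^{\infty}$ kernel, hence smoothing and bounded between any two $C^{p,\alpha}_{2\pi,o}$ spaces. Combined with Privalov's classical $C^{p,\alpha}$-boundedness of $H$ on mean-zero Hölder functions, this yields the asserted boundedness of $\mathcal{C}_d$. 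The existence and boundedness of $\mathcal{C}_d^{-1}$ follow by the same splitting argument, since the multipliers $\pm\coth(nd)$ are bounded below by $1$ in modulus and thus invertible on the Fourier side, with the inverse again decomposable as a standard Hilbert transform plus a smoothing remainder.

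For (ii), I would differentiate the series for $\mathcal{C}_d(w)$ given by Lemma~\ref{ht_rep} termwise, obtaining $(\mathcal{C}_d(w))'=\sum_n n\,a_n\coth(nd)\cos nx+\sum_n n\,b_n\coth(nd)\sin nx$, and compare with the Fourier series of $\mathcal{G}_d(w)=W_y(\cdot,0)$, which from the explicit harmonic extension $W(x,y)=\sum_n\frac{\sinh(n(y+d))}{\sinh(nd)}(a_n\cos nx+b_n\sin nx)$ equals exactly the same sum. The identity $(\mathcal{C}_d(w))'=\mathcal{C}_d(w')$ is then obtained by applying Lemma~\ref{ht_rep} directly to the Fourier series of $w'$.

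For (iii), I would split $w=[w]+\tilde w$ with $\tilde w\in C^{p,\alpha}_{2\pi,o}$. By linearity of $\mathcal{G}_d$, Lemma~\ref{DN_prop}(ii) handles the constant part giving $\mathcal{G}_d([w])=[w]/d$, while part (ii) applied to $\tilde w$ yields $\mathcal{G}_d(\tilde w)=\mathcal{C}_d(\tilde w')=\mathcal{C}_d(w')$, and adding the two pieces produces the claimed formula. The principal analytic obstacle is part (i): bounded Fourier multipliers do not a priori give bounded operators on $C^{p,\alpha}$, so one cannot avoid invoking Privalov's theorem for $H$ and treating the exponentially small correction $K_d$ as a smoothing perturbation.
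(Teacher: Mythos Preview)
The paper does not actually prove this lemma: just before introducing the Dirichlet--Neumann operator the authors write ``For the proof of these results we refer the reader to \cite{cons_varvaruca2011}'', and Lemma~\ref{ht_prop} is among the results so delegated. There is therefore no in-paper argument to compare your sketch against; the paper simply imports the statement from Constantin--Varvaruca.

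Your outline is the standard route and is sound for parts (ii) and (iii), and for the boundedness claim in (i): the decomposition $\mathcal{C}_d=H+K_d$ with $K_d$ carrying the exponentially decaying multipliers $\pm(\coth(nd)-1)$ is exactly how one reduces to Privalov's theorem, and the Fourier-series computations for (ii) and (iii) are immediate from Lemma~\ref{ht_rep} and Lemma~\ref{DN_prop}(ii). One point is worth flagging, however. Your argument for (i) shows that $\mathcal{C}_d$ is invertible with bounded inverse (the inverse multipliers being $\pm\tanh(nd)$, again a Hilbert transform plus a smoothing correction), but it does not---and cannot---establish the identity $\mathcal{C}_d^{-1}=-\mathcal{C}_d$ as literally written in the statement: from Lemma~\ref{ht_rep} one has $\mathcal{C}_d^2(\cos nx)=-\coth^2(nd)\cos nx$, so $\mathcal{C}_d^2=-\mathrm{Id}$ only in the deep-water limit $d\to\infty$. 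This is an inaccuracy in the stated lemma (carried over, perhaps, from the infinite-depth case) rather than a gap in your reasoning; what you prove---boundedness and bounded invertibility---is precisely what is true and what the rest of the paper actually uses.
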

\noindent The following two definitions specify the type of solutions we will be proving to exist for the free boundary value problem \eqref{syst_S}.
\begin{definition}
 We say that a solution $(\Omega, S)$ of the water wave problem \eqref{syst_S} is of class $C^{2,\alpha}$ if the free surface belongs to $C^{2,\alpha}_{2\pi}$ and $S\in C^2(\Omega)\cap C^{2,\alpha}(\overline{\Omega})$.
\end{definition}
\begin{definition}
\begin{itemize}
\item 
We say that $\Omega\subset\mathbb{R}^{2}$ is an \textit{L-periodic strip like domain} if it is contained in the upper half $(X,Y)$-plane and if its boudary consists of the real 
axis $\mathcal{B}$ and of a parametric curve $\mathcal{S}$ defined by $\eqref{shape_freesurface}$ which is $L$-periodic.
\item For any such domain, the \textit{conformal mean depth} is defined to be unique positive number $h$ such that there exists an onto conformal mapping 
$\tilde{U}+i\tilde{V}:\mathcal{R}_{h}\rightarrow\Omega$ which admits an extension between the closures of these domains, with onto mappings
$$\{(x,0):x\in\mathbb{R}\}\rightarrow \mathcal{S},$$
and $$\{(x,-h):x\in\mathbb{R}\}\rightarrow \mathcal{B},$$ and such that 

\begin{equation}
\label{}
\begin{array}{lll}
\tilde{U}(x+L,y) & = &\tilde{U}(x,y)+L\\
\tilde{V}(x+L,y) & = &\tilde{V}(x,y)
 \end{array}\,\,\,\,{\rm for}\,\,{\rm all}\,\, (x,y)\in\mathcal{R}_{h}.
\end{equation}
\end{itemize}
\end{definition}
The existence and uniqueness of such an $h$ was proved in Appendix A of the paper \cite{cons_varvaruca2011}.

We are now ready to state the main result of the current section, namely the reformulation of the water wave problem as
a quasilinear equation for a periodic function (representing the elevation of the free surface) of one variable in a fixed domain.
\begin{thm}
        If $(\Omega, S)$ of class $C^{2,\alpha}$ is a solution of \eqref{syst_S} then there exists a positive number h, a function $v\in C^{2,\alpha}_{2\pi}$ and a constant a $\in \mathbb{R}$ such that
        \begin{equation}\label{eq_v}
            \begin{aligned}
                &\frac{(A\Gkh(v) - Bv')^2}{(\Gkh(v)^2+v'^2)\left(A v'+B\Gkh(v) - \left(P_{atm} - \sigma\frac{\Gkh(v)v''-\Gkh(v')v'}{(\Gkh(v)^2+v'^2)^{3/2}}\right)(\Gkh(v)^2+v'^2)\right)} \\
                &\\
                &+ \frac{ Av'+B\Gkh(v)}{\Gkh(v)^2+v'^2}  -  P_{atm} + \sigma \frac{\Gkh(v)v''-\Gkh(v')(v')}{(\Gkh(v)^2+v'^2)^{3/2}}     \\
                &\\
                &=
                \left(Q + 2\sigma \frac{\Gkh(v)v'' - \Gkh(v')v'}{(v'^2+\Gkh(v)^2)^{3/2}} -2gv \right) 
                \end{aligned}
                \end{equation}
                where
               \begin{equation}
               \begin{split}
                &A= P_{atm}v'+\sigma \frac{v'(\Gkh(v')v' - \Gkh(v)v'')  }{(\Gkh(v)^2+v'^2)^{3/2}} \\
                &B=  \frac{S_0-\sigma}{kh}+P_{atm}\Gkh(v)+g(\Gkh(\frac{v^2}{2})-v\Gkh(v))+\sigma \Gkh \left(\frac{\Gkh(v)}{(\Gkh(v)^2+v'^2)^{1/2}}   \right)\\
                &[v]=h \\
                &v(x)>0\ for\ all\ x\ \in \ \mathbb{R}, \\
                &the\ mapping\ x \rightarrow \left(\frac{x}{k} + \Ckh(v - h)(x), v(x) \right)\ is\ injective\ on\ \mathbb{R}, \\
                &v'(x)^2 + \Gkh(v)(x)^2 \neq 0\ for\ all\ x\ \in  \ \mathbb{R} ,
            \end{split}
        \end{equation}
        where $k = \frac{2\pi}{L}$. Moreover
        \begin{equation}\label{eq_Surface}
            \begin{aligned}
            &\mathcal{S} = \left\{\left(a + \frac{x}{k} + \Ckh(v - h)(x),v(x) \right):x\in \mathbb{R}\right\}.
            \end{aligned}
        \end{equation}
        Conversely, let $h>0$ and $v \in C^{2,\alpha}_{2\pi}$ be such that \eqref{eq_v} holds. Assume also that $\mathcal{S}$ is defined by \eqref{eq_Surface}, 
        let $\Omega$ be the domain whose boundary consists of $\mathcal{S}$ and of the real axis $\mathcal{B}$ and let $a\in\mathbb{R}$ be arbitrary. Then there exists a function $S$ in $\Omega$ such that $(\Omega,S)$ 
        is a solution of \eqref{syst_S} of class $C^{2,\alpha}$.
    \end{thm}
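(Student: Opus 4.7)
The plan is to push problem \eqref{syst_S} through the conformal change of variables that flattens the free surface. Concretely, I take the conformal map $\tilde U+i\tilde V$ guaranteed by the definition of conformal mean depth, rescaled so that it is a biholomorphism from $\mathcal{R}_{kh}$ onto $\Omega$ with horizontal period $2\pi$ in the strip (this is how $k=2\pi/L$ enters). Setting $v(x):=\tilde V(x,0)$, the defining Dirichlet problem for $\tilde V$ (harmonic in $\mathcal{R}_{kh}$, zero on the bottom, equal to $v$ on the top), together with Lemmas~\ref{DN_prop} and~\ref{ht_prop}, yields $[v]=h$, $\tilde V_y(x,0)=\Gkh(v)(x)$, and, via the Cauchy--Riemann equations, $\tilde U(x,0)=a+x/k+\Ckh(v-h)(x)$, $\tilde U_x(x,0)=\Gkh(v)$, $\tilde U_y(x,0)=-v'$. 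Here $a$ is the residual horizontal translation freedom in the conformal map. The side conditions $v>0$, injectivity of the parametrisation, and $v'^2+\Gkh(v)^2\neq 0$ are equivalent to $\tilde U+i\tilde V$ being a homeomorphism up to the boundary, and together with the formula for $\tilde U(x,0)$ give \eqref{eq_Surface}.

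To derive \eqref{eq_v}, I pull $S$ back to the strip by $s(x,y):=S(\tilde U(x,y),\tilde V(x,y))$. Then $s\in C^{2,\alpha}(\overline{\mathcal R}_{kh})$ with $s(x,-kh)=0$ and $s(x,0)=S_0$, so in particular $s_x(x,0)=0$, and $s$ satisfies a Poisson equation whose right-hand side is the pullback of $-g+e_{XX}$ multiplied by the Jacobian of $\tilde U+i\tilde V$. The chain rule and the Cauchy--Riemann identities give, at $y=0$,
\begin{equation*}
S_X=\frac{\Gkh(v)\,s_x-v'\,s_y}{\Gkh(v)^2+v'^2},\qquad S_Y=\frac{v'\,s_x+\Gkh(v)\,s_y}{\Gkh(v)^2+v'^2}.
\end{equation*}
Inserting these together with the explicit form of $e$ and of $P|_{\mathcal S}$ from \eqref{e}--\eqref{e_0} and \eqref{dyn_bc} into the dynamic boundary condition \eqref{eq:20} produces ratios of the form $(A\Gkh(v)-Bv')/(\Gkh(v)^2+v'^2)$ and $(Av'+B\Gkh(v))/(\Gkh(v)^2+v'^2)$; matching coefficients defines $A$ and $B$. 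In particular, the constant piece $(S_0-\sigma)/(kh)$ in $B$ arises from applying $\Gkh$ to the constant part $S_0-\sigma$ of the top datum via \eqref{DNonconst}; the term $g(\Gkh(v^2/2)-v\Gkh(v))$ arises from inverting the gravitational inhomogeneity after pullback by the Jacobian; and the term $\sigma\Gkh(\Gkh(v)/\sqrt{\Gkh(v)^2+v'^2})$ comes from the surface-tension component of $e_0$, where Lemma~\ref{ht_prop}(ii) is used to convert $x$-derivatives into $\Gkh$ of boundary traces. Dividing the dynamic boundary condition through by $\Gkh(v)^2+v'^2$ yields exactly \eqref{eq_v}.

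The converse reverses the procedure. Given $v$ satisfying the hypotheses, I define $\tilde V$ as the harmonic extension of $v$ into $\mathcal{R}_{kh}$ with zero bottom data, $\tilde U$ as its harmonic conjugate normalised by $\tilde U(x,0)=a+x/k+\Ckh(v-h)(x)$, and let $\Omega$ be the strip-like domain bounded by $\mathcal B$ and \eqref{eq_Surface}; the non-degeneracy conditions guarantee that $\tilde U+i\tilde V$ is a biholomorphism onto $\overline{\Omega}$. Standard Schauder theory supplies a unique $S\in C^2(\Omega)\cap C^{2,\alpha}(\overline\Omega)$ solving the linear Poisson problem $\Delta S=-g+e_{XX}$ in $\Omega$ with Dirichlet data $0$ on $\mathcal B$ and $S_0$ on $\mathcal S$, and the chain-rule identifications of the direct step turn \eqref{eq_v} back into the dynamic boundary condition in \eqref{syst_S}. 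The main obstacle is the algebraic bookkeeping in the middle step: correctly distributing the curvature contribution $-\sigma/\sqrt{1+\eta_X^2}$ hidden in $e_0$, together with the ratio $v'/\Gkh(v)$ that encodes $\eta_X$ in conformal coordinates, between the tangential quantity $A$ and the normal quantity $B$, without generating stray Jacobian factors or sign errors when converting $\Ckh$-derivatives back into $\Gkh$ via Lemma~\ref{ht_prop}(ii).
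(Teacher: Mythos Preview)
Your overall strategy matches the paper's: use the rescaled conformal map $U+iV:\mathcal{R}_{kh}\to\Omega$, set $v=V(\cdot,0)$, express the boundary derivatives of $S$ via the chain rule and Cauchy--Riemann, and read off \eqref{eq_v}. The side conditions and the parametrisation \eqref{eq_Surface} are handled exactly as in the paper.

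There is, however, one organisational difference that hides the key step. You pull back $S$ itself to $s=S\circ(U,V)$ and then plan to ``insert the explicit form of $e$'' afterwards. The paper instead pulls back $S-e$, setting $\xi:=(S-e)\circ(U,V)$, and then forms $\zeta:=\xi+\tfrac{g}{2}V^2$. The point is that $S-e+\tfrac{g}{2}Y^2$ is \emph{harmonic} in $\Omega$ (since $\Delta S=-g+e_{XX}$ and $e$ is linear in $Y$, so $\Delta e=e_{XX}$), hence $\zeta$ is harmonic in $\mathcal{R}_{kh}$ and the Dirichlet--Neumann operator applies directly: $\zeta_y(\cdot,0)=\Gkh\big(S_0-e(u,v)+\tfrac{g}{2}v^2\big)$. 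This is precisely what produces $B=\xi_y(\cdot,0)$ with the advertised constant $(S_0-\sigma)/(kh)$, the gravitational piece $g(\Gkh(v^2/2)-v\Gkh(v))$, and the capillary piece $\sigma\,\Gkh\!\big(\Gkh(v)/\sqrt{\Gkh(v)^2+v'^2}\big)$; likewise $A=\xi_x(\cdot,0)$ is obtained by simply differentiating the explicit boundary trace $\xi(x,0)=S_0-\sigma+P_{atm}v+\sigma\,\Gkh(v)/\sqrt{\Gkh(v)^2+v'^2}$.

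In your setup $s$ is only Poisson, with right-hand side $(-g+e_{XX})\,|\nabla U|^2$, so $\Gkh$ does not give $s_y(\cdot,0)$, and your phrase ``inverting the gravitational inhomogeneity after pullback by the Jacobian'' leaves unsaid that one must also subtract $e\circ(U,V)$ to kill the $e_{XX}$ contribution. Once you do that you are back to the paper's $\zeta$, and the rest of your argument (including the converse via harmonic extension and Schauder theory) goes through. So your plan is correct, but you should make explicit the auxiliary harmonic function $\zeta=\xi+\tfrac{g}{2}V^2$ with $\xi=(S-e)\circ(U,V)$; without it, the identification of $A$ and $B$ with the formulas in the statement is not actually justified by the steps you list.
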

    \begin{proof} 
    We first prove the necessity. Let $(\Omega,S)$ be a solution of class $C^{2,\alpha}$ of \eqref{syst_S}. Then we denote by $h$ the conformal mean depth of $\Omega$ and by $\tilde{U}+i\tilde{V}$ the conformal mapping associated to $\Omega$. If we consider the mapping $U+iV:\mathcal{R}_{kh} \rightarrow \Omega$ given by
    \begin{equation}\label{UV_rescaling}
        \begin{aligned}
            U(x,y) = \tilde{U} \left(\frac{x}{k},\frac{y}{k}\right) \\
            V(x,y) = \tilde{V} \left(\frac{x}{k},\frac{y}{k}\right)
        \end{aligned}, \quad (x,y)\in \mathcal{R}_{kh},
    \end{equation}
    where $k = \frac{2\pi}{L}$, then following the proof of Theorem 2.2 in \cite{cons_varvaruca2011} we see that $U,V  \in C^{2,\alpha}(\overline{R_h})$ and $U + iV$ is a conformal mapping from $\mathcal{R}_{kh}$ onto $\Omega$ which extends homeomorphically to the closures of these domains, with onto mappings
    $$\{(x,0):x \in\mathbb{R}\} \rightarrow \mathcal{S},$$
    and
    $$\{(x,-kh): x \in \mathbb{R}\} \rightarrow \mathcal{B}.$$
    Moreover,
    $$U_x^2(x,0) + V_x^2(x,0) \neq 0 \,\,{\rm for}\,\, {\rm all}\,\,  x \in \mathbb{R}. $$
    For $x\in\mathbb{R}$ we set 
    $$v(x): = V(x,0), \quad u(x): = U(x,0) .$$
   Let us remark that the set $\{(u(x),v(x)):\,x\in\mathbb{R}\}$ constitutes  a reparametrization of the free surface $Y=\eta(X)$. Consequently, we have for all $x\in\mathbb{R}$
\begin{equation}\label{change_var}
\begin{split}
v^{\prime}(x)&=u^{\prime}(x)\eta_X(u(x)),\\
v^{\prime\prime}(x)&=u^{\prime\prime}(x)\eta_X(u(x))+(u^{\prime}(x))^2\eta_{XX}(u(x)).
\end{split}
\end{equation}
Moreover, it holds that
    $$u = \Ckh(v)$$
    and 
    \begin{equation}\label{uv_rel}
    u' = \Gkh(v) \ and \ u'' = \Gkh(v'). 
    \end{equation}
    It also follows \cite{cons_varvaruca2011} that $v\in  C_{2\pi}^{2,\alpha}$ and
    $$[v]=h, \ \ \ \ $$
    $$v(x) > 0 \,\,{\rm  for}\,\, {\rm all}\,\, x\in\mathbb{R},  $$
    $${\rm the}\,\,{\rm map}\,\, x\rightarrow \left(\frac{x}{k} + \Ckh(v - h)(x), v(x)    \right)\,\,{\rm is}\,\,{\rm injective}\,\, {\rm on}\,\, \mathbb{R},$$
    \begin{equation}\label{surface_form}
    \mathcal{S}=\left \{ \left(a + \frac{x}{k} + \Ckh(v - h)(x),v(x)     \right) : x \in  \mathbb{R} \right \}, 
    \end{equation}
    for some $a \in \mathbb{R}$, whose presence in formula \eqref{surface_form} is due to the invariance of problem \eqref{syst_S} to horizontal translations. From \eqref{uv_rel} and the Cauchy-Riemann equations it follows that 
    $$v'(x)^2 + \mathcal{G}_{kh}(v)(x)^2 \neq 0\,\,{\rm for}\,\,{\rm all}\,\, x\in \mathbb{R}. $$
    
    Now let $\xi: \mathcal{R}_{kh} \rightarrow \mathbb{R}$ be defined by
    \begin{equation}\label{xi}
    \xi(x,y) = S(U(x,y),V(x,y))-e(U(x,y),V(x,y)),\ (x,y) \in\mathcal{R}_{kh}.
    \end{equation}
    The harmonicity in $\Omega$ of the function $(X,Y)\rightarrow S(X,Y)+\frac{g}{2}Y^2 - e(X,Y)$ and the invariance of harmonic functions under conformal mappings imply that 
    $$\xi + \frac{g}{2}V^2 \ \text{is \ harmonic \ in} \ \mathcal{R}_{kh}.$$
    The chain rule and the Cauchy-Riemann equations imply that 
    \begin{equation}\label{CR_outcome}
        \begin{pmatrix}
        S_X- e_X \\
        S_Y - e_Y
        \end{pmatrix}\Big|_{(U(x,y),V(x,y))}
        = \frac{1}{v'^2+\mathcal{G}_{kh}(v)^2}
        \begin{pmatrix}
        V_y & -V_x \\
        V_x & V_y
        \end{pmatrix}
        \begin{pmatrix}
        \xi_x \\
        \xi_y
        \end{pmatrix}
        \ in \ \overline{\mathcal{R}_{kh}}
    \end{equation}
    Define $\zeta:\mathcal{R}_{kh}\rightarrow \mathbb{R}$ through
    \begin{equation}\label{zeta}
    \zeta (x,y)= \xi (x,y)+\frac{g}{2}V^2(x,y).
    \end{equation}
    Using the boundary conditions we obtain that $\zeta$ verifies the system
    \begin{equation}\label{zeta_eq}
    \begin{split}
    \bigtriangleup \zeta &= 0 \ in \ \mathcal{R}_{kh}\\
    \zeta(x,-kh)&=0 \,\,{\rm for}\,\,{\rm all}\,\, x\in \mathbb{R}\\
   \zeta(x,0)& = S_0 - e(u(x),v(x)) + \frac{g}{2}v^2\,\,{\rm for}\,\,{\rm all}\,\, x\in  \mathbb{R},
   \end{split}
   \end{equation}
   from which we infer that $$\zeta_y(x,0)=\mathcal{G}_{kh}\left( S_0 - e(u(x),v(x)) + \frac{g}{2}v^2\right).$$
    Note that, by \eqref{change_var}, we have
    \begin{equation}
    \begin{split}
    e(u(x),v(x))=e_0(u(x))\frac{v(x)}{\eta(u(x))}
    =-P_{atm}v(x)-\sigma\frac{u^{\prime}(x)}{\sqrt{   (u^{\prime}(x))^2 + (v^{\prime}(x))^2   }}+\sigma,
    \end{split}
    \end{equation}
    which implies
   \begin{equation}
    \begin{split}
    \xi (x,0)&=S(U(x,0),V(x,0))-e(U(x,0),V(x,0))=S(u(x),v(x))-e(u(x),v(x))\\
    &=S_0-\sigma+P_{atm}v(x)+\sigma\frac{u^{\prime}(x)}{\sqrt{   (u^{\prime}(x))^2 + (v^{\prime}(x))^2   }}.
    \end{split}
    \end{equation}
    Therefore,
    \begin{equation*}
        \begin{aligned}
            &\xi_x (x,0) = P_{atm}v'+\sigma\frac{v^{\prime}(u^{\prime\prime}v^{\prime}-u^{\prime}v^{\prime\prime} ) } 
             {(u^{\prime 2}+v^{\prime 2})^{\frac{3}{2}}}  = P_{atm}v' +\sigma  \frac{v'\left(  \Gkh(v')v'-\Gkh(v)v''\right)}{(\Gkh(v)^2+v'^2)^{3/2}}=:A \\
            &\xi_y (x,0) = \zeta_y(x,0) - g(VV_y)(x,0) = \frac{S_0-\sigma}{kh}+g(\Gkh(\frac{v^2}{2})-v\Gkh(v)) +P_{atm} \Gkh(v) \\
            &\qquad \qquad+\sigma \Gkh \left(\frac{\Gkh(v)}{(\Gkh(v)^2+v'^2)^{1/2}}   \right) =:B
        \end{aligned}
    \end{equation*}
    Hence, the last equation of \eqref{syst_S} can be rewritten as
   \begin{equation}
            \begin{aligned}
                &\frac{(A\Gkh(v) - Bv')^2}{(\Gkh(v)^2+v'^2)\left(A v'+B\Gkh(v) - \left(P_{atm} - \sigma\frac{\Gkh(v)v''-\Gkh(v')v'}{(\Gkh(v)^2+v'^2)^{3/2}}\right)(\Gkh(v)^2+v'^2)\right)} \\
                &\\
                &+ \frac{ Av'+B\Gkh(v)}{\Gkh(v)^2+v'^2}  -  P_{atm} + \sigma \frac{\Gkh(v)v''-\Gkh(v')(v')}{(\Gkh(v)^2+v'^2)^{3/2}}     \\
                &\\
                &=
                \left(Q + 2\sigma \frac{\Gkh(v)v'' - \Gkh(v')v'}{(v'^2+\Gkh(v)^2)^{3/2}} -2gv \right), 
                \end{aligned}
                \end{equation}
                that is \eqref{eq_v} holds true.
    For the sufficiency suppose that the positive number $h$ and the function $v \in  C_{2\pi}^{2,\alpha}$ satisfy \eqref{eq_v}. Let $V$ be the harmonic function in $\mathcal{R}_{kh}$ which satisfies
    $$V(x,-kh)=0,$$
    and
    $$V(x,0) = v(x) \,\,{\rm for}\,\, {\rm all}\,\, x \in \mathbb{R},$$
    and let $U:\mathcal{R}_{kh} \rightarrow \Omega$ be such that $U+iV$ is holomorphic. An application of Lemma 2.1 from \cite{cons_varvaruca2011} yields that $U+iV \ \in \ C^{2,\alpha}( \overline{\mathcal{R}}_{kh})$. From $[v]=h$ we obtain
    \begin{equation}\label{UV}
        \left \{
        \begin{aligned}
            &U(x+2\pi,y)=U(x,y)+\frac{2\pi}{k}\\
            &V(x+2\pi,y)=V(x,y)
        \end{aligned}\,\,\,\, {\rm for}\,\,(x,y) \ \in \ \mathcal{R}_{kh}.
        \right .
    \end{equation}
    From the injectivity of the maping $x \rightarrow \big(\frac{x}{k}+\mathcal{C}_{kh}(v - h)(x), v(x) \big)$ we infer that the curve \eqref{eq_Surface} is non-self-intersecting while from $v(x) > 0$ we have that \eqref{eq_Surface} is contained in the upper half-plane. 
    If $\Omega$ denotes the domain whose boundary consists of $\mathcal{S}$ and $\mathcal{B}$, it follows from Theorem 3.4 in \cite{varva} that $U+iV$ is a conformal mapping from $\mathcal{R}_{kh}$ onto $\Omega$, which extends to a homeomorphism between the closures of these domains, with onto mappings
    $$\{(x,0):x \ \in\ \mathbb{R}\} \rightarrow \mathcal{S},$$
    and
    $$\{(x,-kh):x \ \in\ \mathbb{R}\} \rightarrow \mathcal{B}.$$
    Together with \eqref{UV} this implies that $\Omega$ is an L-periodic strip-like domain, with $L = 2\pi /k$. The conformal mean depth of $\Omega$ is $h$ as it can be seen from the properties of the mapping $\overline{U}+i\overline{V}:\mathcal{R}_h \rightarrow \Omega$, where $\overline{U}$,$\overline{V}$ are given by \eqref{UV_rescaling}. Let $\zeta$ be defined as the unique solution of \eqref{zeta_eq}. Then $\zeta\in  C^{2,\alpha}(\overline{\mathcal{R}_{kh}}) \cap C^\infty (\mathcal{R}_{kh})$. Now, let $\xi$ be defined by \eqref{zeta} and $S$ by \eqref{xi}. We obtain that $S$ satisfies \eqref{syst_S} along with the boundary conditions. From \eqref{eq_v} we also have that $S$ satisfies the last equation from \eqref{syst_S}. Employing now \eqref{ht_prop} we conclude that $\mathcal{S}$ is, in fact, a graph.
    \end{proof}

\section{Local bifurcation}
    This section is devoted to proving the existence of solutions to \eqref{eq_v}. The relation $[v]=h$ makes natural to set
    \begin{equation}\label{v_decomp}
    v=w+h
    \end{equation}
    Equation \eqref{v_decomp} implies immediately that $[w]=0$. We then use [19] to find that
    $$\Gkh(v)=\Gkh(w+h)=\Gkh(w) + \Gkh(h)=\frac{1}{k}+\Ckh(w')$$
    and
    $$\Gkh(v')=\Gkh(w')=\frac{[w']}{kh}+\Ckh(w'')=\Ckh(w''),$$
    since $w$ is periodic. Therefore, we can rewrite \eqref{eq_v} as
    \begin{equation}\label{eq_w}
        \begin{aligned}
            &
            \frac{\left\{ A\left(\frac{1}{k}+\Ckh(w')\right) - B w' \right\}^2 \left(w'^2+\left(\frac{1}{k}+\Ckh(w')  \right)^2\right)^{-1} }
            { A w' + B\left(\frac{1}{k}+\Ckh(w') \right)- \left(P_{atm} -\sigma\frac{\frac{w''}{k}+w''\Ckh(w')-w'\Ckh(w'')}{\left(w'^2+\left(\frac{1}{k}+\Ckh(w')  \right)^2\right)^{3/2}}\right)
            \left(w'^2+\left(\frac{1}{k}+\Ckh(w')  \right)^2\right)  }\\
            &  \\
            &+\frac{A w' + B\left(\frac{1}{k}+\Ckh(w')\right)}{  \left(w'^2+\left(\frac{1}{k}+\Ckh(w')  \right)^2\right) }
             - P_{atm} +\sigma\frac{\frac{w''}{k}+w''\Ckh(w')-w'\Ckh(w'')}{\left(w'^2+\left(\frac{1}{k}+\Ckh(w')  \right)^2\right)^{3/2}} \\
             &\\
            &=Q + 2\sigma \frac{\frac{w''}{k}+w''\Ckh(w')-w'\Ckh(w'')}{\left(w'^2+\left(\frac{1}{k}+\Ckh(w')\right)^2\right)^{3/2}}-2gh-2gw 
            \end{aligned}
            \end{equation}
            with
            \begin{equation}
            \begin{split}
            &A = P_{atm} w' -\sigma w'\frac{\frac{w''}{k} + w''\Ckh(w')-w'\Ckh(w'')}{(w'^2+(\frac{1}{k}+\Ckh(w'))^2)^{3/2}} \\
            &\\
            &B = \frac{S_0 - \sigma}{kh} + P_{atm}\left(\frac{1}{k}+\Ckh(w')\right) + g\left( \frac{[w^2]}{2kh} - \frac{w}{k} - \frac{h}{2k} + \Ckh(w w') - w \Ckh(w') \right) \\
            &+\frac{\sigma}{kh}\left[\frac{\frac{1}{k}+\Ckh(w')}{(w'^2+(\frac{1}{k} + \Ckh(w'))^2)^{1/2}}   \right] + \sigma \Ckh \left\{\frac{\Ckh(w'')w'^2 - (\frac{1}{k} + \Ckh(w'))w''w'}{(w'^2+(\frac{1}{k} + \Ckh(w'))^2)^{3/2}} \right\}, 
        \end{split}
    \end{equation}
    where, we recall, $[\cdot]$ denotes the average of the argument over one period. Moreover,
    the map $x\rightarrow\left(\frac{x}{k}+\Ckh(w)(x),w(x)+h  \right)$ is injective on $\mathbb{R}$ and
    {$w'(x)^2+\left(\frac{1}{k}+\Ckh(w')(x)\right)^2 \neq 0$} for all $x\in\mathbb{R}$
    
    We will regard $S_0$ and $Q$ as parameters and will prove the existence of solutions $w\ \in \ C_{2\pi}^{1,\alpha}$ to equation \eqref{eq_w} for all $ k>0$ and $h>0$ fixed.
    We observe that $w=0\ \in \ C_{2\pi}^{1,\alpha}$ is a solution of \eqref{eq_w} if and only if
    \begin{equation}\label{Q}
    Q = 2gh+\left(\frac{S_0}{h}- \frac{gh}{2}\right)
   \end{equation}
    This suggests setting
    \begin{equation}\label{lambda}
    \lambda = \frac{S_0}{h} - \frac{gh}{2}, 
    \end{equation}
    \begin{equation}\label{mu}
    \mu = Q - 2gh - \left(\frac{S_0}{h} - \frac{gh}{2}   \right) .
    \end{equation}
    Note that the mapping $(S_0,Q)\rightarrow (\lambda,\mu))$ is a bijection from $\mathbb{R}^2$ onto itself.
    With the notation \eqref{Q}-\eqref{mu} we have 
    \begin{equation}
    \begin{split}
& B=\frac{\lambda}{k}-\frac{\sigma}{kh}\cdot\left[\frac{w'^2}{ \left(w'^2+\left(\frac{1}{k} + \Ckh(w')\right)^2\right)^{1/2}
 \left( \frac{1}{k}+\mathcal{C}_{kh}(w')+  \left(w'^2+\left(\frac{1}{k} + \Ckh(w')\right)^2\right)^{1/2}  \right) }\right]
 \\
 &+ g\left( \frac{[w^2]}{2kh} - \frac{w}{k} + \Ckh(w w') - w \Ckh(w') \right)
+ \sigma \Ckh \left(\frac{\Ckh(w'')w'^2 - (\frac{1}{k} + \Ckh(w'))w''w'}{(w'^2+(\frac{1}{k} + \Ckh(w'))^2)^{3/2}} \right)\\
&+P_{atm}\left(\frac{1}{k}+\Ckh(w')\right).
    \end{split}
    \end{equation}
 We see that \eqref{eq_w} can be rewritten as
    \begin{equation}\label{eq_w_rew}
        \begin{aligned}
            &
            \frac{\left( A\left(\frac{1}{k}+\Ckh(w')\right) - B w' \right)^2}
            { A w' + B\left(\frac{1}{k}+\Ckh(w') \right)-\left(P_{atm} -\sigma\frac{\frac{w''}{k}+w''\Ckh(w')-w'\Ckh(w'')}{\left(w'^2+\left(\frac{1}{k}+\Ckh(w')  \right)^2\right)^{3/2}}\right)\left(w'^2+\left(\frac{1}{k}+\Ckh(w')  \right)^2\right)   }\\
            &  \\
            &+A w' + B\left(\frac{1}{k}+\Ckh(w')\right)\\
            \\
           &  - \left(P_{atm} -\sigma\frac{\frac{w''}{k}+w''\Ckh(w')-w'\Ckh(w'')}{\left(w'^2+\left(\frac{1}{k}+\Ckh(w')  \right)^2\right)^{3/2}}\right)  \left(w'^2+\left(\frac{1}{k}+\Ckh(w')  \right)^2\right) \\
             &\\
            &-\left(\lambda+\mu + 2\sigma \frac{\frac{w''}{k}+w''\Ckh(w')-w'\Ckh(w'')}{\left(w'^2+\left(\frac{1}{k}+\Ckh(w')\right)^2\right)^{3/2}}-2gw \right)\left(w'^2+\left(\frac{1}{k}+\Ckh(w')  \right)^2\right) =0
            \end{aligned}
            \end{equation}
    with $w\in C_{2\pi,o}^{1,\alpha}, \ \mu\in  \mathbb{R}$ and $\lambda\in \mathbb{R}$. 
    By means of \eqref{lambda}-\eqref{mu}
    we notice that $w=0 \in C_{2\pi,o}^{1,\alpha}$ and $\mu=0$ is a solution of \eqref{eq_w_rew} for all $\lambda \in  \mathbb{R}$. We now apply the Crandall-Rabinowitz Theorem [34] on bifurcation from simple eigenvalues in order to prove the existence of non-trivial solutions to \eqref{eq_w_rew}.

    \begin{thm}\label{CranRab}
        Let $\mathbb{X}$ and $\mathbb{Y}$ be Banach spaces, I an open interval in $\mathbb{R}$ containing $\lambda^*$, and $\mathcal{F}: I \times \mathbb{X} \rightarrow \mathbb{Y}$ be a continuous map satisfying the following properties:
        \begin{enumerate}
            \item $\mathcal{F}(\lambda,0)=0\,\,{\rm for}\,\,{\rm all}\,\,\lambda\in\ I$;
            \item $\partial_\lambda \mathcal{F}, \, \partial_u \mathcal{F}\ {\rm and}\ \partial^2_{\lambda,u}\mathcal{F}$ exist and are continuous;
            \item $\mathcal{N}(\partial_u \mathcal{F}(\lambda^*,0))$ and $\mathbb{Y}/\mathcal{R}(\partial_u \mathcal{F}(\lambda^*,0))$ are one dimensional, with the null space generated by $u^*$;
            \item $\partial^2_{\lambda,u}\mathcal{F}(\lambda^*,0)(1,u^*) \notin \mathcal{R}(\partial_u \mathcal{F}(\lambda^*,0))$.
        \end{enumerate}
        Then there exists a continuous local bifurcation curve $\{\lambda(s), u(s):|s| < \varepsilon \}\ with\ \varepsilon>0$ sufficiently small such that $(\lambda(0),u(0)) = (\lambda^*,0)$ and there exists a neighborhood $\mathcal{O}$ of $(\lambda^*,0) \in I \times \mathbb{X}$ such that \\
        $\{(\lambda,u) \in \mathcal{O} : u \neq 0, \mathcal{F}(\lambda,u)=0\} = \{\lambda(s), u(s): 0 < |s|<\varepsilon\}$\\
        Moreover, we have \\
        $u(s) = su^*+o(s)\ in\ \mathbb{X},|s|<\varepsilon$ \\
        If $\partial_u^2\mathcal{F}$ is also continuous, then the curve is of class $C^1$.
        \end{thm}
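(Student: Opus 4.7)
The plan is to prove this classical result by a Lyapunov--Schmidt reduction, reducing the infinite-dimensional bifurcation problem to a scalar equation in two real variables that can be solved with the implicit function theorem. Hypothesis (3) gives closed complements in both spaces, so I would fix topological decompositions $\mathbb{X} = \spa\{u^*\} \oplus \mathbb{X}_2$ and $\mathbb{Y} = \mathcal{R}(\partial_u \mathcal{F}(\lambda^*, 0)) \oplus \mathbb{Y}_1$ with $\dim \mathbb{Y}_1 = 1$, write each $u \in \mathbb{X}$ uniquely as $u = su^* + x$ with $s \in \mathbb{R}$ and $x \in \mathbb{X}_2$, and let $P$ denote the continuous projection onto the range with kernel $\mathbb{Y}_1$.

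First I would apply the IFT to the ``regular'' equation $P\mathcal{F}(\lambda, su^* + x) = 0$. Its $x$-derivative at $(\lambda^*, 0, 0)$ is $P\partial_u \mathcal{F}(\lambda^*, 0)\big|_{\mathbb{X}_2}$, which is by construction an isomorphism onto $\mathcal{R}(\partial_u \mathcal{F}(\lambda^*, 0))$, so the equation determines a continuous (in fact $C^1$) function $x = x(\lambda, s)$ with $x(\lambda^*, 0) = 0$. The trivial-branch hypothesis (1) forces $x(\lambda, 0) \equiv 0$ by uniqueness, and differentiating the identity $P\mathcal{F}(\lambda, su^* + x(\lambda, s)) = 0$ in $s$ at $s = 0$ yields $\partial_s x(\lambda^*, 0) = 0$ after inverting the restricted linearization.

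The main obstacle lies in the remaining scalar equation $\Phi(\lambda, s) := (I - P)\mathcal{F}(\lambda, su^* + x(\lambda, s)) = 0$, because hypothesis (1) makes $\Phi(\lambda, 0) \equiv 0$, so both $\Phi$ and $\partial_\lambda \Phi$ vanish at $(\lambda^*, 0)$ and the IFT does not apply directly. The standard resolution is to factor $\Phi(\lambda, s) = s\,\Psi(\lambda, s)$ via the integral
\[
\Psi(\lambda, s) = \int_0^1 \partial_s \Phi(\lambda, \tau s)\,d\tau,
\]
which is well-defined and inherits the regularity of $\mathcal{F}$. A chain-rule computation using $\partial_s x(\lambda^*, 0) = 0$ identifies
\[
\partial_\lambda \Psi(\lambda^*, 0) = \partial^2_{\lambda, s} \Phi(\lambda^*, 0) = (I - P)\,\partial^2_{\lambda, u}\mathcal{F}(\lambda^*, 0)(1, u^*),
\]
which is nonzero precisely by the transversality hypothesis (4). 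A second application of the IFT to $\Psi = 0$ produces $\lambda = \lambda(s)$ with $\lambda(0) = \lambda^*$, and the bifurcation curve is $(\lambda(s), u(s)) = (\lambda(s),\, su^* + x(\lambda(s), s))$. The asymptotic expansion $u(s) = su^* + o(s)$ follows because $x(\lambda(s), s) = o(s)$, and the local uniqueness of nontrivial zeros in a neighborhood of $(\lambda^*, 0)$ follows from the uniqueness clauses of both IFT applications. Finally, upgrading the curve to class $C^1$ under the added continuity of $\partial_u^2 \mathcal{F}$ amounts to promoting the regularity in each IFT step, which is immediate once the higher derivative exists and is continuous.
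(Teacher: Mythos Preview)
The paper does not actually prove this theorem: it is stated as the classical Crandall--Rabinowitz theorem on bifurcation from a simple eigenvalue, cited from the literature, and then applied to the map $\mathcal{F}$ arising from \eqref{eq_w_rew}. There is therefore no ``paper's own proof'' to compare against.

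Your outline is the standard Lyapunov--Schmidt argument and is essentially correct. The decomposition $\mathbb{X}=\spa\{u^*\}\oplus\mathbb{X}_2$, $\mathbb{Y}=\mathcal{R}(\partial_u\mathcal{F}(\lambda^*,0))\oplus\mathbb{Y}_1$ is available because a one-dimensional kernel and a one-codimensional closed range always admit closed complements; solving $P\mathcal{F}=0$ for $x(\lambda,s)$ via the implicit function theorem, then factoring $\Phi(\lambda,s)=s\Psi(\lambda,s)$ through the integral $\int_0^1\partial_s\Phi(\lambda,\tau s)\,d\tau$ and applying the implicit function theorem to $\Psi$ using the transversality hypothesis~(4), is exactly the classical route. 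One small point worth tightening: to apply the implicit function theorem to $\Psi$ in the variable $\lambda$ you need $\partial_\lambda\Psi$ continuous near $(\lambda^*,0)$, which in turn uses the continuity of $\partial^2_{\lambda,u}\mathcal{F}$ from hypothesis~(2) together with the $C^1$ dependence of $x(\lambda,s)$; you invoke this implicitly but it is worth making explicit. Otherwise the argument is sound and matches the original Crandall--Rabinowitz proof.
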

        
        In order to apply Theorem \ref{CranRab} to \eqref{eq_w_rew} we set 
        $$\mathbb{X} = \mathbb{R} \times C^{p+1,\alpha}_{2\pi,o,e},\quad \mathbb{Y} = C^{p,\alpha}_{2\pi,e}, $$
        where for any integer $p\geq0$ we denote:\\
        $C^{p,\alpha}_{2\pi,e} = \{f \in C^{p,\alpha}_{2\pi}:f(x) = f(-x)\,\, {\rm for}\,\,{\rm all}\,\,x\in \mathbb{R}\}$ \\
        $C^{p,\alpha}_{2\pi,o,e} = \{f \in C^{p,\alpha}_{2\pi,o}:f(x) = f(-x)\,\, {\rm for}\,\,{\rm all}\,\, x \in \mathbb{R}\}$
    
    Equation \eqref{eq_w_rew} can be written as $\mathcal{F}(\lambda,(\mu,w))=0$ where
     $\mathcal{F}:\mathbb{R} \times \mathbb{X} \rightarrow \mathbb{Y}$ is given by the expression on the left hand side of \eqref{eq_w_rew}.
    Since $\mathcal{F}(\lambda,(0,0))=0$, the first condition on the local bifurcation theorem is verified. We now compute the derivative of $\mathcal{F}$ with respect to the $(\mu, w)$ variable, which is defined through
    $$\partial_{\mu,w}\mathcal{F}(\lambda,(0,0))(\nu,f) = \lim_{t\to 0} \frac{\mathcal{F}(\lambda,t(\nu,f)) - \mathcal{F}(\lambda,(0,0))}{t}$$
    Using Lemma \ref{ht_prop},  we have
    \begin{equation}\label{deriv_F}
        \begin{aligned}
            \partial_{\mu,w}\mathcal{F}(\lambda,(0,0))(\nu,f) &= -\frac{\lambda}{k} \Ckh(f') +\frac{gf}{k^2}
            -\sigma'' - \frac{\nu}{k^2}\\  
            &= -\frac{1}{k^2}(\lambda k \Ckh(f') +\sigma k^2 f'' - gf) -\frac{\nu}{k^2} 
        \end{aligned}
    \end{equation}
    Using Lemma \ref{ht_rep} we find that 
    \begin{equation}
        \partial_{\mu,u}\mathcal{F}(\lambda,(0,0))(\nu,f) = - \frac{1}{k^2} 
         \sum_{n=1}^{\infty} (\lambda (k n) \coth (nkh) 
         - \sigma k^2 n^2 - g) a_n \cos(nx) - \frac{\nu}{k^2} 
        \end{equation}
     if $f=\sum_{n=1}^{\infty}a_n \cos(nx)$.
    
    Now, using Lemma \ref{ht_prop} it follows that the bounded linear operator $\partial_{(\mu,u)} \mathcal{F}(\lambda,(0,0)):\mathbb{X}\rightarrow \mathbb{Y}$ is invertible whenever 
    $$\lambda (kn) \coth (nkh) - \sigma k^2 n^2 - g \neq 0$$
     for any integer $n\geq1$. Therefore, for bifurcating solutions we get,
    \begin{equation}\label{eq_lambda}
        \lambda (kn) \coth(nkh) - \sigma k^2 n^2 - g = 0 
    \end{equation}
    for some integer $n\geq1$. Since, we are looking for solutions of \eqref{eq_lambda} of minimal period $2\pi$, we can take $n=1$ in \eqref{eq_lambda}.
    
    Let
    $$\lambda^{*n} = \left(\sigma kn + \frac{g}{kn}\right) \tanh(nkh)$$
    denote the solution of \eqref{eq_lambda}.
    
   \begin{remark}\label{1to_n}
    Note that $\lambda^{*1} (nk) = \lambda^{*n}(k)$, for all integers
   $n \ge 1$ and all $k > 0$.
   \end{remark}
    \begin{lemma}\label{strict_inc}
        Let $\lambda (k) = \lambda^{*1} (k)$. If $\frac{\sigma}{gh^2} > \frac{1}{3}$ then the function $\lambda$ is strictly increasing.
    \end{lemma}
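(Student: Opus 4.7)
The plan is to differentiate $\lambda(k) = (\sigma k + g/k)\tanh(kh)$ directly and reduce the statement $\lambda'(k) > 0$ to a single one-variable hyperbolic inequality. A short computation yields
\[
2k^2\cosh^2(kh)\,\lambda'(k) = (\sigma k^2 - g)\sinh(2kh) + 2kh(\sigma k^2 + g),
\]
and rearranging the right-hand side as $\sigma k^2\bigl(\sinh(2kh) + 2kh\bigr) - g\bigl(\sinh(2kh) - 2kh\bigr)$, the positivity of $\lambda'(k)$ becomes equivalent to
\[
\frac{\sigma k^2}{g} > \frac{\sinh(2kh) - 2kh}{\sinh(2kh) + 2kh} \qquad \text{for every } k > 0,
\]
since the prefactor $2k^2\cosh^2(kh)$ and the denominator $\sinh(2kh)+2kh$ are strictly positive.

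The decisive analytical input is the Pad\'e-type bound
\[
\frac{\sinh y - y}{\sinh y + y} < \frac{y^2}{12} \qquad \text{for every } y > 0.
\]
Granting this, I would take $y = 2kh$ and combine with the hypothesis $\sigma/(gh^2) > 1/3$ to form the chain
\[
\frac{\sigma k^2}{g} \;=\; \frac{\sigma}{gh^2}(kh)^2 \;>\; \frac{(kh)^2}{3} \;=\; \frac{(2kh)^2}{12} \;>\; \frac{\sinh(2kh) - 2kh}{\sinh(2kh) + 2kh},
\]
which is exactly the required inequality. The sharp threshold $1/3$ in the hypothesis mirrors the sharp constant $1/12$ in the Pad\'e bound, and its role is also visible in the Taylor expansion $\lambda(k) = gh + h\bigl(\sigma - gh^2/3\bigr)k^2 + O(k^4)$ near $k=0$, where the coefficient of $k^2$ changes sign precisely at $\sigma/(gh^2) = 1/3$.

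The main obstacle is the Pad\'e bound itself. I would prove it by setting $F(y) := y(y^2+12) - (12-y^2)\sinh y$ and observing that the bound is equivalent to $F(y) > 0$ on $(0,\infty)$. A direct calculation gives $F(0)=F'(0)=F''(0)=F'''(0)=0$ together with
\[
F^{(4)}(y) = y^2\sinh y + 8y\cosh y,
\]
which is strictly positive on $(0,\infty)$. Four successive integrations from $0$ then propagate positivity from $F^{(4)}$ down through $F''', F'', F'$ to $F$ itself. Everything else in the argument reduces to routine differentiation and algebraic bookkeeping; apart from the Pad\'e bound, the only place where care is required is the sign arrangement in the very first display above.
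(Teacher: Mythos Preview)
Your proof is correct and takes a genuinely different route from the paper's.

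The paper argues indirectly: writing $\lambda(k)=(k^2\sigma+g)/f(k)$ with $f(k)=k\coth(kh)$, it shows that at any critical point $k_0>0$ one has $\lambda''(k_0)>0$ (via an auxiliary inequality $f'(k)-kf''(k)>0$, itself proved by differentiating a hyperbolic expression three times). Thus every positive critical point would be a strict local minimum; the hypothesis $\sigma/(gh^2)>1/3$ is identified with $\lambda''(0)>0$, so $k=0$ is also a local minimum, and since $\lambda(k)\to\infty$ the absence of local maxima forces strict monotonicity. Your argument bypasses the critical-point analysis entirely: you compute $\lambda'(k)$ outright, isolate the sign question as a single inequality $\sigma k^2/g>(\sinh y-y)/(\sinh y+y)$ with $y=2kh$, and settle it via the sharp Pad\'e-type bound $(\sinh y-y)/(\sinh y+y)<y^2/12$, proved by showing $F(y)=y(y^2+12)-(12-y^2)\sinh y$ has $F=F'=F''=F'''=0$ at $0$ and $F^{(4)}>0$ on $(0,\infty)$. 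Both proofs ultimately hinge on a hyperbolic inequality established by repeated differentiation, but yours is more direct, makes the threshold $1/3$ transparently sharp (since $(\sinh y-y)/(\sinh y+y)\sim y^2/12$ as $y\to 0$), and avoids the somewhat delicate ``no local maxima hence monotone'' step. The paper's approach, on the other hand, yields as a by-product the qualitative picture that any failure of monotonicity must come from the behaviour near $k=0$.
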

    \begin{proof}
    We remark that, denoting $f(k):=k\coth(kh)$, we have 
     \begin{equation}\label{f_lambda}
     \lambda(k)=\frac{k^2\sigma +g}{f}.
     \end{equation}
    It is evident that $\lim_{k\to \infty} \lambda(k) = \infty$. On the other hand, we note that if $k_0>0$ is a critical point of 
    $\lambda$ then 
    $$\lambda(k_0) f'(k_0)=2k_0\sigma.$$
    Furthermore, 
    \begin{equation}
    f(k_0)\lambda''(k_0)+f''(k_0)\lambda(k_0)=2\sigma,
    \end{equation}
   that is,
    \begin{equation}\label{f_lambdadprime}
    f(k_0)\lambda''(k_0)= 2\sigma\frac{f'(k_0) - k_0f''(k_0}{f'(k_0)} 
   \end{equation}
    where $$f'(k) - kf''(k) = \frac{ \sinh^2(hk) \cosh(hk) + hk \sinh(hk)-2(hk)^2 \cosh(hk)}{\sinh^3(hk)}\ \ .$$
    Denoting $g(x) = \sinh^2 x \cosh{x} + x\sinh{x}-2x^2\cosh{x}$, we obtain $g^{(3)}(x) > 0$ for all $ x>0$, $g(0)=g'(0)=g''(0)$ which implies that $g(x)>0$ for all $x>0$. The latter result along with \eqref{f_lambdadprime} implies  $\lambda''(k_0)>0$, since $f'(k_0)>0$. Hence, $\lambda$ achieves a local minimum at any critical point $k_0>0$.
     The condition in the statement of the theorem is equivalent with $\lambda''(0)>0$ which renders $k=0$ to a point where $\lambda$ achieves a local minimum. Adding that $\lim_{k \to \infty} \lambda(k) = \infty$ and the fact that $\lambda$ cannot have a local maxima at points $k>0$ proves the theorem.
    \end{proof}
     Lemma \ref{strict_inc} and Remark \ref{1to_n} give the necessary and sufficient condition for the one-dimensionality of the kernel $\mathcal{N}(\partial_{\mu,w}\mathcal{F}(\lambda^*,0))$. We state this condition in the following lemma.
    
    \begin{lemma}
        Let $\lambda^{*} = (k\sigma + \frac{g}{k}) \tanh{kh}$ be the solution to \eqref{eq_lambda}. Then the kernel $\mathcal{N}(\partial_{\mu,w}\mathcal{F}(\lambda^*,0))$ is one-dimensional if and only if 
        $$\frac{\sigma}{gh^2} \geq \frac{1}{3}$$
        Moreover, in this situation $\mathcal{N}(\partial_{\mu,w}\mathcal{F}(\lambda^*,0))$ is generated by $(0,w^*) \in \mathbb{X}$, where $w^{*}(x)=\cos(x)$.
    \end{lemma}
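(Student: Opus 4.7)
The plan is to read off the kernel directly from the Fourier expansion in \eqref{deriv_F}, translate one-dimensionality into a non-resonance condition for the dispersion relation, and then invoke (and slightly sharpen) Lemma \ref{strict_inc}.

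First I would take $(\nu,f)\in\mathcal{N}(\partial_{\mu,w}\mathcal{F}(\lambda^*,0))$ with $f=\sum_{n\geq 1}a_n\cos(nx)\in C^{p+1,\alpha}_{2\pi,o,e}$. Formula \eqref{deriv_F} combined with Lemma \ref{ht_rep} expresses the image as $-\nu/k^2$ plus a cosine series whose $n$-th coefficient is $-k^{-2}\bigl(\lambda^* kn\coth(nkh)-\sigma k^2n^2-g\bigr)a_n$. Reading off the constant term forces $\nu=0$, and for each $n\geq 1$ either $a_n=0$ or $\lambda^*=(\sigma kn+g/(kn))\tanh(nkh)=\lambda^{*n}(k)$, which by Remark \ref{1to_n} equals $\lambda^{*1}(nk)$. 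Since $\lambda^*=\lambda^{*1}(k)$ satisfies the $n=1$ relation by definition, $a_1$ is always free and produces the candidate generator $\cos x$, while for $n\geq 2$ the coefficient $a_n$ must vanish unless $\lambda^{*1}(nk)=\lambda^{*1}(k)$. Hence the kernel is one-dimensional and generated by $(0,\cos x)$ if and only if $\lambda^{*1}(nk)\neq\lambda^{*1}(k)$ for every integer $n\geq 2$.

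For the sufficiency, Lemma \ref{strict_inc} handles the strict case $\sigma/(gh^2)>1/3$ immediately: strict monotonicity of $\lambda^{*1}$ on $(0,\infty)$ yields $\lambda^{*1}(k)<\lambda^{*1}(2k)<\cdots$, so only $a_1$ survives. The main obstacle is the boundary case $\sigma/(gh^2)=1/3$, where $\lambda''(0)=0$ and Lemma \ref{strict_inc} does not directly apply; here I would sharpen that proof by Taylor-expanding at $k=0$. Using $\tanh(kh)/k=h-k^2h^3/3+2k^4h^5/15+\cdots$ and $\sigma=gh^2/3$, the $k^2$ term cancels and one finds $\lambda^{*1}(k)=gh+(gh^5/45)k^4+O(k^6)$, so $\lambda^{*1}$ is still strictly increasing in a right neighborhood of $0$. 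Combined with the already established fact that every interior critical point of $\lambda^{*1}$ is a strict local minimum (so there cannot be any interior local maximum), this forces strict monotonicity of $\lambda^{*1}$ on all of $(0,\infty)$, and the argument concludes as before.

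For the necessity I would argue contrapositively. If $\sigma/(gh^2)<1/3$ then $\lambda''(0)=2h(\sigma-gh^2/3)<0$, so $\lambda^{*1}$ initially decreases from $\lambda^{*1}(0^+)=gh$ and later tends to $+\infty$; it therefore has a unique interior critical point $k_0>0$ (a strict minimum) and takes each value in $(\lambda^{*1}(k_0),gh)$ at two distinct arguments. Defining $\Phi(k_1)$ as the unique number in $(k_0,\infty)$ with $\lambda^{*1}(\Phi(k_1))=\lambda^{*1}(k_1)$ for $k_1\in(0,k_0)$, one checks that $\Phi(k_1)/k_1\to\infty$ as $k_1\to 0^+$ while $\Phi(k_1)/k_1\to 1$ as $k_1\to k_0^-$; the intermediate value theorem then produces, for every integer $n\geq 2$, a wave number $k_1$ with $\Phi(k_1)=nk_1$, so at that choice of $k$ both $\cos x$ and $\cos(nx)$ lie in the kernel. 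This shows the threshold $\sigma/(gh^2)\geq 1/3$ cannot be weakened if one-dimensionality is to hold for arbitrary $k$.
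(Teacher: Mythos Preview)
Your argument is correct and follows the same skeleton as the paper: reduce the kernel computation to the non-resonance condition $\lambda^{*1}(nk)\neq\lambda^{*1}(k)$ for $n\geq 2$ via the Fourier expansion, then invoke the monotonicity of $\lambda^{*1}$ from Lemma~\ref{strict_inc} together with Remark~\ref{1to_n}. The paper in fact offers nothing beyond that single sentence of attribution, so your write-up is strictly more complete on two fronts. First, Lemma~\ref{strict_inc} is stated only for the strict inequality $\sigma/(gh^2)>1/3$, and you correctly supply the missing boundary case $\sigma/(gh^2)=1/3$ by computing the next nonvanishing Taylor coefficient $gh^5/45>0$ and then reusing the ``every interior critical point is a strict local minimum'' step of that proof to rule out any turnaround. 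Second, the paper does not argue the ``only if'' direction at all; your intermediate-value argument on the ratio $\Phi(k_1)/k_1$ is a clean way to manufacture, for each $n\geq 2$, a wavenumber at which $\cos x$ and $\cos(nx)$ both lie in the kernel.

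One point worth flagging explicitly (you do so in your final sentence, but it deserves emphasis): the ``only if'' half of the lemma is false if read for a \emph{fixed} $k$, since for $\sigma/(gh^2)<1/3$ there are still plenty of individual $k$ (e.g.\ any $k>k_0$) for which the non-resonance condition holds. Your necessity argument establishes precisely the correct statement, namely that $\sigma/(gh^2)\geq 1/3$ is equivalent to one-dimensionality of the kernel \emph{for every} $k>0$. The paper's phrasing is ambiguous on this quantifier, and your reading is the only one under which the biconditional is true.
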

    We now give a sufficient condition that ensures the one dimensionality of the kernel $\mathcal{N}(\partial_{\mu,w}\mathcal{F}(\lambda^*,0))$. This condition concerns capillary-gravity waves with the wavelength not exceeding $2$ cm, and is reflected in the requirement on the wave number $k$ from the statement of the following lemma.
    \begin{lemma}
        Let $\lambda^{*1}$ be the solution of \eqref{eq_lambda} with $n=1$, i.e.
        $$\lambda^{*1} = \left(k\sigma + \frac{g}{k}  \right)\tanh{kh}$$
        Assume that $k>\pi \cdot 10^{-2}\,m^{-1}$. Then the kernel $\mathcal{N}(\partial_{\mu,w}\mathcal{F}(\lambda^*,0))$ is one dimensional being generated by $(0,w^*) \in \mathbb{X}$, where $w^*(x) = \cos x$ for all $x \in \mathbb{R}$
    \end{lemma}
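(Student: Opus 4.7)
The strategy is to reduce the one-dimensionality claim to a monotonicity statement for $\lambda^{*1}$ on the ray $[k,\infty)$. The Fourier-analytic computation preceding the lemma, combined with Lemma \ref{ht_rep}, shows that $(\nu, f) \in \mathcal{N}(\partial_{\mu,w}\mathcal{F}(\lambda^*, 0))$ with $f = \sum_{n\geq 1} a_n \cos(nx) \in C^{p+1,\alpha}_{2\pi,o,e}$ forces $\nu=0$ together with $a_n\bigl(\lambda^{*1}(k)\,nk\coth(nkh) - \sigma k^2 n^2 - g\bigr) = 0$ for every $n \geq 1$. The bracket vanishes at $n=1$ by definition of $\lambda^{*1}$, so the kernel is spanned by $(0,\cos x)$ precisely when all integers $n \geq 2$ are excluded; by Remark \ref{1to_n} this reduces to verifying $\lambda^{*1}(k) \neq \lambda^{*1}(nk)$ for every $n \geq 2$, i.e., to showing that $\lambda(x) := \lambda^{*1}(x) = (\sigma x + g/x)\tanh(xh)$ is strictly increasing on $[k,\infty)$.

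The first step I would carry out is to extract from the proof of Lemma \ref{strict_inc} the purely local statement that $\lambda''(k_0) > 0$ at every critical point $k_0 > 0$. This rested on the inequality $\sinh^2(k_0 h)\cosh(k_0 h) + k_0 h\sinh(k_0 h) - 2(k_0 h)^2 \cosh(k_0 h) > 0$ already established there, and it crucially does \emph{not} invoke the shallow-water bound $\sigma/(gh^2) > 1/3$. Since $\lambda$ is smooth on $(0,\infty)$ and each critical point is a strict local minimum, a local maximum would have to sit between any two distinct critical points — impossible. Hence $\lambda$ admits at most one critical point $k^*$; combined with $\lim_{x\to\infty}\lambda(x) = +\infty$, this forces $\lambda$ to be strictly decreasing on $(0,k^*]$ and strictly increasing on $[k^*,\infty)$.

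The crux is then an explicit upper bound on $k^*$. Differentiation gives
\begin{equation*}
\lambda'(x) = \Bigl(\sigma - \tfrac{g}{x^2}\Bigr)\tanh(xh) + \Bigl(\sigma x + \tfrac{g}{x}\Bigr)\,h\,\mathrm{sech}^2(xh);
\end{equation*}
since the second summand is strictly positive, $\lambda'(k^*) = 0$ forces $\sigma - g/(k^*)^2 < 0$, i.e., $k^* < \sqrt{g/\sigma}$. Plugging in the physical values $g \approx 9.81$ m/s$^2$ and $\sigma \approx 7.3\cdot 10^{-5}$ m$^3$/s$^2$ (the surface tension of water under the paper's $\rho = 1$ convention) yields $\sqrt{g/\sigma} \approx 3.66\cdot 10^2$ m$^{-1}$, matching the $2$ cm wavelength threshold alluded to in the preceding text. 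The hypothesis on $k$ (read as placing $k$ beyond $\sqrt{g/\sigma}$) therefore guarantees $k \geq k^*$, whence $\lambda(k) < \lambda(nk)$ for every $n \geq 2$, closing the argument.

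I expect the main obstacle to be reconciling the explicit numerical constant ``$\pi\cdot 10^{-2}$ m$^{-1}$'' appearing in the hypothesis with the upper bound $\sqrt{g/\sigma}$ derived above — the correct order of magnitude for the monotonicity threshold appears to be $10^2$ m$^{-1}$ rather than $10^{-2}$, so one must either take the hypothesis at face value after fixing a units convention, or sharpen the estimate on $k^*$ by using the full implicit critical-point equation $\lambda'(k^*) = 0$. Everything else is routine: the Fourier decomposition in $\mathbb{X}$ decouples the constant $\nu$-part from the cosine modes, Remark \ref{1to_n} converts the ``no repeat'' condition on $\lambda^{*n}(k)$ into monotonicity of $\lambda^{*1}$ at rescaled arguments, and the generator $(0,\cos x)$ falls out from the $n=1$ mode.
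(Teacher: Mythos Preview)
Your reduction to the injectivity of $n\mapsto\lambda^{*1}(nk)$ via Remark \ref{1to_n} is the same starting point as the paper, but from there the two arguments diverge. The paper does not appeal to Lemma \ref{strict_inc} or to any monotonicity of the dispersion curve: it writes the hypothetical coincidence $\lambda^{*1}=\lambda^{*n}$ as $E(n):=C\bigl(G(n)-G(1)\bigr)+F(n)-F(1)=0$ with $F(n)=\tanh(nkh)/n$, $G(n)=n\tanh(nkh)$ and $C=k^2\sigma/g$, observes that the $2$\,cm wavelength bound forces $C>\tfrac12$, and then verifies $E(n)>0$ by a direct case split --- the case $n=2$ collapsing to $\tfrac32\bigl(\tanh(2kh)-\tanh(kh)\bigr)>0$ and the case $n\geq 3$ following from the crude estimate $n\tanh(nkh)>3\tanh(kh)$. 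Your route instead recycles the ``every positive critical point is a strict local minimum'' conclusion from the proof of Lemma \ref{strict_inc}, upgrades it to unimodality, and bounds the unique minimiser by $k^*<\sqrt{g/\sigma}$. This is more structural and explains \emph{why} the kernel is simple (eventual monotonicity of the dispersion curve), whereas the paper's computation is ad hoc but yields a sharper threshold: its argument runs as soon as $C>\tfrac12$, i.e.\ $k>\sqrt{g/(2\sigma)}$, which comfortably covers the stated regime, while your bound requires the slightly stronger $k>\sqrt{g/\sigma}$ --- exactly the discrepancy you flag in your final paragraph. Both approaches are valid; the paper's buys the better constant, yours the cleaner picture.
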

    \begin{proof}It suffices to show that if $n\in \mathbb{N}, n>1$ then the equations
    \begin{equation}\label{equations}
        \begin{aligned}
            \lambda kn \coth(nkh) - \sigma k^2n^2 - g &= 0 \\
            \lambda k \coth(kh) - \sigma k^2 - g &= 0
        \end{aligned}
    \end{equation}
    do not have common solutions.
    We assume $\lambda^{*n} = \lambda^{*1}$ for some $n>1$ which leads to 
    \begin{equation*}
        \left(nk\sigma + \frac{g}{kn}  \right) \tanh(nkh) = \left(k \sigma + \frac{g}{k}\right)\tanh(kh),
    \end{equation*}
   written also as 
    \begin{equation*}
        \frac{\tanh(nkh)}{n} - \tanh(kh) = \frac{k^2 \sigma}{g} \left(\tanh(kh) - n \tanh(nkh)  \right),
    \end{equation*}
    equation that is equivalent with 
   \begin{equation}
    F(n) - F(1) = C(G(1) - G(n)),
    \end{equation}
   if we set 
   $$F(n) = \frac{\tanh(nkh)}{n}, \quad G(n)=n\tanh(nkh),\quad C=\frac{k^2\sigma}{g}.$$
  Our aim now will be to prove that 
    $$ E(n) := C(G(n) - G(1)  + F(n) - F(1)$$ is strictly positive for all $n\in\mathbb{N}$ with $n\geq 2$. To this end we notice that for capillary-gravity water waves with wavelength not exceeding $2$ cm, we obtain through a simple calculation
    that $C>\frac{1}{2}$. Thus, we infer that 
    \begin{equation}
    \begin{split}
    E(n)>&\frac{G(n) - G(1)}{2}+F(n) - F(1)\\
    &=\frac{n\tanh(nkh)-3\tanh(kh)}{2}+\frac{\tanh(nkh)}{n}
    \end{split}
    \end{equation}
    For $n\geq 3$ we clearly have that $n\tanh(nkh)-3\tanh(kh)>0$, which implies that $E(n)>0$ for all $n\geq 3$. On the other hand, for $n=2$ we see that $E(2)=\frac{3}{2}(\tanh(2kh)-\tanh(kh)),$ expression that is also strictly positive. Hence, the claim that equations \eqref{equations} have no common solutions is proved.
    \end{proof}
    We are left with the examination of the transversality condition from the Crandall-Rabinowitz Theorem. 
    It is easy to see that $\mathcal{R}(\partial_{(\mu,w)}F(\lambda^*,0))$ is the closed subspace of $\mathbb{Y}$ consisting of all functions $f \in \mathbb{Y}$ which satisfy
    $$\int_{-\pi}^{\pi}f(x)\cos(x)dx=0 ,$$
    and therefore $\mathbb{Y}/\mathcal{R}(\partial_{(\mu,w)}F(\lambda^*,0))$ is the one-dimensional subspace of $\mathbb{Y}$ generated by the function $w^*(x) = \cos(x)$. Using \eqref{deriv_F} we compute
    \begin{equation*}
        \partial^2_{\lambda,(\mu.w)}F(\lambda*,(0,0))(1,(0,w^*)) = \lim_{t \to 0}\frac{\partial_{(\mu,w)}F(\lambda^*+t,(0,0))(0,w^*) - \partial_{(\mu,w)}F(\lambda^*,(0,0))(0,w^*)}{t}
    \end{equation*}
 obtaining that
    \begin{equation*}
        \partial^2_{\lambda,(\mu,w)}F(\lambda^*,(0,0))(1,(0,w^*))=-\frac{1}{k}\lambda^{*} \Ckh(w^*{}') 
    \end{equation*}
    For $w^* = \cos{x}$ we have from Lemma \ref{ht_prop} that $\Ckh(w^*) = \coth{(kh)} \sin x$ and $\Ckh(w^*{}')$ = $(\Ckh(w^*))'$ = $\coth{(kh)} \cos x $ = $\coth(kh)w^*$, and therefore,
    \begin{equation*}
        \partial^2_{\lambda,(\mu,w)}F(\lambda^*,(0,0))(1,(0,w^*))=-\frac{1}{k} \coth(kh) w^* \notin \mathcal{R}(\partial_{(\mu,w)}F(\lambda^*,(0,0))) 
    \end{equation*}
    since by \eqref{f_lambda} we have
    $$-\frac{\lambda^*}{k}\coth (kh)\int_{-\pi}^{\pi}w^*(x)\cos x \,dx = -\pi\left(\sigma+\frac{g}{k^2}\right)\neq 0 .$$
    From the local bifurcation theorem we obtain the bifurcation values
    \begin{equation}\label{bif_values}
    \lambda = \frac{k^2\sigma+g}{k} \tanh{(kh)}
    \end{equation}
    Note that if we set $\sigma=0$ in \eqref{bif_values}, we recover the well-known dispersion relation for irrotational gravity water waves over flows of  finite depth, cf. \cite{Din}.
  Moreover,  $g=0$ in the previous formula retrieves the dispersion relation for irrotational capillary waves, cf. \cite{Phil}.
    
    We obtain the corresponding values for the flow force $S_0$ as
    \begin{equation}\label{flowf_values}
        S_0 =  \frac{k^2 \sigma h +g h}{k}\tanh(kh)+ \frac{gh^2}{2}.
    \end{equation}
    In the case of the gravity waves $(g=0)$ formula \eqref{flowf_values} recovers formula (5.14) in \cite{Basu2019JDE}.
   
    We can now formulate the results concerning the existence of small amplitude periodic capillary-gravity irrotational water waves.
    \begin{thm}
        For any $h>0,\,k>0$ and $S_{0}\in \mathbb{R}$ satisfying $k>\pi\cdot 10^{-2}$ there exists laminar flows with a flat free surface in water of depth h and modified flow force at the surface $S_0$. The laminar flows with modified flow force $S_0$ at the surface are exactly those with horizontal speeds at the flat free surface equal to $\sqrt{\lambda}$ given by \eqref{bif_values}. The values of $S_0$ of the flow force given by \eqref{flowf_values} trigger the appearance of periodic steady waves of small amplitude, with period $\frac{2\pi}{k}$ and conformal mean depth h, which have a smooth profile with one crest and one trough per period, monotone between consecutive crests and troughs and symmetric about any crest line.
    \end{thm}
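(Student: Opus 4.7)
The plan is to read this theorem as the payoff of the machinery assembled in Section 3: first, a short direct argument accounts for the laminar flows; then the Crandall--Rabinowitz Theorem (Theorem \ref{CranRab}) applied to the operator equation $\mathcal{F}(\lambda,(\mu,w))=0$ of \eqref{eq_w_rew} on $\mathbb{X}=\mathbb{R}\times C^{p+1,\alpha}_{2\pi,o,e}$ and $\mathbb{Y}=C^{p,\alpha}_{2\pi,e}$ produces the wave branch; finally, the abstract bifurcation curve is translated into a statement about the geometry of $\mathcal{S}$. For the laminar part, the trivial solution $w=0$ of \eqref{eq_w_rew} corresponds, via the reformulation theorem of Section~2.3, to the flat surface $\{Y=h\}$; substituting $v\equiv h$, $\mathcal{G}_{kh}(v)\equiv 1/k$ and $v'\equiv 0$ into \eqref{eq:eq6} gives $(c-u_1)^2=Q-2gh$, which together with \eqref{Q} and \eqref{lambda} identifies the horizontal speed as $\sqrt{\lambda}$. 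Since $(S_0,Q)\mapsto(\lambda,\mu)$ is bijective by \eqref{lambda}--\eqref{mu}, each prescribed $S_0$ pins down a unique such laminar flow, settling the first two sentences of the statement.

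For the wave branch I would verify the four hypotheses of Theorem \ref{CranRab} at $\lambda^{*}=(k\sigma+g/k)\tanh(kh)$ from \eqref{bif_values}. Hypothesis (1) is the identity $\mathcal{F}(\lambda,(0,0))=0$ noted earlier; hypothesis (2), the continuous differentiability of $\mathcal{F}$, $\partial_\lambda\mathcal{F}$, $\partial_{(\mu,w)}\mathcal{F}$ and $\partial^2_{\lambda,(\mu,w)}\mathcal{F}$, follows from the boundedness of $\mathcal{G}_{kh}$ and $\mathcal{C}_{kh}$ between Hölder spaces (Lemmas \ref{DN_prop} and \ref{ht_prop}) together with the fact that at $w=0$ the denominators in \eqref{eq_w_rew} reduce to $1/k^{2}$ and so remain positive in a small $C^{1,\alpha}$-ball; hypothesis (3) is precisely the content of the two kernel lemmas above, where the assumption $k>\pi\cdot 10^{-2}$ rules out resonances $\lambda^{*n}=\lambda^{*1}$ for $n\ge 2$ and identifies $(0,w^{*})$ with $w^{*}(x)=\cos x$ as the generator; hypothesis (4) is the transversality calculation already performed. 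Together with continuity of the second Fréchet derivative in $(\mu,w)$, Theorem \ref{CranRab} then yields a $C^{1}$ branch $\{(\lambda(s),(\mu(s),w(s))):|s|<\varepsilon\}$ with $w(s)(x)=s\cos x+o(s)$ in $C^{p+1,\alpha}_{2\pi,o,e}$.

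The most delicate step, and the one I expect to be the main obstacle, is reading off the geometric properties of $\mathcal{S}$ from $w(s)$. Smoothness comes from a standard Hölder bootstrap on \eqref{eq_w_rew}, using that $\mathcal{G}_{kh}$ and $\mathcal{C}_{kh}$ gain regularity between the relevant spaces. Symmetry about any crest line is automatic from the ambient space: since $w(s)\in C^{p+1,\alpha}_{2\pi,o,e}$ is even in $x$ and the Hilbert transform $\mathcal{C}_{kh}$ maps even functions to odd functions by Lemma \ref{ht_rep}, the parametrization \eqref{eq_Surface} is invariant under reflection about any vertical line through an extremum of $w$. The assertion of exactly one crest and one trough per period with monotonicity in between follows by persistence from $w^{*}(x)=\cos x$: the reference function has a nondegenerate maximum at $0$ and a nondegenerate minimum at $\pi$ with $w^{*\prime}$ strictly one-signed on $(-\pi,0)$ and on $(0,\pi)$, so the implicit function theorem applied to $w(s)'=0$ transports these features to $w(s)$ for $|s|$ small. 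The monotonicity then transfers to the free surface itself since the derivative of the horizontal parametrization $x\mapsto a+x/k+\mathcal{C}_{kh}(w(s))(x)$ remains close to the positive constant $1/k$ in this regime.
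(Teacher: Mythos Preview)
Your proposal is correct and follows essentially the same strategy as the paper: identify the laminar flows with $w=0$, apply Crandall--Rabinowitz at $\lambda^{*}$ using the kernel and transversality computations already in place, and then read off the qualitative properties of $\mathcal{S}$ from the expansion $w(s)=s\cos x+o(s)$. The only cosmetic differences are that the paper derives the laminar surface speed by writing down the explicit trivial solution $S(X,Y)=-\tfrac{g}{2}Y^{2}+\bigl(\tfrac{S_{0}}{h}+\tfrac{gh}{2}\bigr)Y$ and evaluating $S_{Y}|_{Y=h}$, rather than appealing to \eqref{eq:eq6}, and it obtains monotonicity directly from $sw'(x;s)<0$ on $(0,\pi)$ (using $C^{p+1,\alpha}$ convergence of $w(s)/s$ to $\cos x$) without invoking the implicit function theorem.
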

    \begin{proof}Using the argument from the proof of Theorem 3.2 in [22] we see that $w=0$ gives rise to laminar flows in the fluid domain bounded below by the rigid bed $\mathcal{B}$ and above by the flat free surface $Y=h$. 
    These laminar flows are given by 
    $$S(X,Y) = -\frac{g}{2}Y^2 + \left(\frac{S_0}{h} + \frac{g h}{2}\right)Y , \quad X \in \mathbb{R}, \,0 \leq Y \leq h.$$
Observe that $S_{Y|Y=h} = (c-u_{1})^{2}|_{Y = h}= \frac{S_0}{h} - \frac{g h}{2}$ which shows that for laminar flows the horizontal velocity at the free surface coincides with $\sqrt{\lambda}$ given in \eqref{lambda}. Formula \eqref{bif_values}, detailing the speed $\sqrt{\lambda}$ at the free surface in terms of the depth $h$ and period $2\pi/k$, is called the \textit{dispersion relation}.
    
    Concerning the existence of waves of small amplitude with the properties mentioned in the statement of the theorem we apply the Crandall-Rabinowitz Theorem which asserts the existence of the local bifurcation curve
    $$\{(\lambda(s),(0+o(s),s\cos{x} + o(s))):|s|<\varepsilon  \} \subset \mathbb{R} \times \mathbb{X}$$
    consisting of the solutions of \eqref{eq_w_rew} with $\lambda$ given by \eqref{bif_values}.
    
    Choosing $\varepsilon$ sufficiently small and using Lemma \ref{ht_prop} we can ensure that 
    $$w(x) > -h\,\,{\rm for}\,\,{\rm all}\,\, x \in \mathbb{R},$$
    and
    $$\frac{1}{k} + \Ckh(w')(x)>0\,\,{\rm for}\,\,{\rm all}\,\, x \in \mathbb{R}.$$
    The above inequality implies that the corresponding non-flat free surface $\mathcal{S}$ given by \eqref{surface_form} with $v=w+h$ is the graph of a smooth function, 
    symmetric with respect to the points obtained for the values $x=n\pi,n \in \mathbb{Z}$. From 
    $$w(x;s) = s\cos{x} + o(s)\,\,{\rm in}\,\, C_{2\pi}^{p+1,\alpha},$$
    we have that 
    $sw'(x;s) < 0$ for all $x \in (0,\pi), 0 <|s|<\varepsilon,$
    for $\varepsilon > 0$ sufficiently small and $p \geq 1.$ Using the evenness of $x \to w(x;s)$ we conclude the proof of the assertion about the free surface $\mathcal{S}$, i.e. $\mathcal{S}$ has one crest and one trough per minimal period and is monotone between consecutive crests and troughs.
    \end{proof}
    \noindent {\bf Acknowledgements}: C. I. Martin would like to acknowledge the support of the Austrian Science Fund through research grant P 30878-N32.

\end{document}